\newcommand{\mb}[1]{\mathbb{{#1}}}
\newcommand{\mc}[1]{\mathcal{{#1}}}
\newcommand{\vp}{\varphi}
\newcommand{\ud}{\mathrm{d}}
\newcommand{\p}[1]{\mb{P} \left( #1 \right)}
\newcommand{\dd}{\mathrm{d}}
\newcommand{\E}{\mathbb{E}}
\newcommand{\1}{\textbf{1}}
\newcommand{\R}{\mathbb{R}}
\newcommand{\Z}{\mathbb{Z}}
\newcommand{\commnt}[1]{%
}%
\newcommand{\ve}{\varepsilon}
\DeclareMathOperator{\sgn}{sgn}
\DeclareMathOperator{\rk}{rank}
\theoremstyle{definition}
\newtheorem{theorem}{Theorem} 
\newtheorem{cor}{Corollary} 
\newtheorem{proposition}[theorem]{Proposition}
\newtheorem{lemma}[theorem]{Lemma}
\theoremstyle{remark}
\newtheorem*{remark}{Remark}
\begin{document}

	\newgeometry{tmargin=2.0cm, bmargin=2.0cm, lmargin=2.0cm, rmargin=2.0cm}
	

	\title{On the optimal $L_p$-$L_4$ Khintchine inequality }

	\author{Adam Barański}
	
	\address{(AB) University of Warsaw, Banacha 2, 02-097 Warsaw, Poland.}
	\email{ab429277@students.mimuw.edu.pl}
	
	\author{Daniel Murawski}
	
	\address{(DM) University of Warsaw, Banacha 2, 02-097 Warsaw, Poland.}
	\email{dk.murawski@student.uw.edu.pl}
	
	\author{Piotr Nayar}
	
	\address{(PN) University of Warsaw, Banacha 2, 02-097 Warsaw, Poland.}
	\email{nayar@mimuw.edu.pl}
	
	\author{Krzysztof Oleszkiewicz}
	\thanks{Research of the fourth named author was
		partially funded by the Deutsche Forschungsgemeinschaft (DFG, German Research Foundation) under Germany's Excellence Strategy – EXC-2047/1 –
		390685813. He thanks the Hausdorff Research Institute for Mathematics for its kind hospitality.}
	
	\address{(KO) University of Warsaw, Banacha 2, 02-097 Warsaw, Poland.}
	\email{koles@mimuw.edu.pl}

	\maketitle

	\begin{abstract}
		We derive  optimal dimension independent constants in the classical Khintchine inequality between the $p$th and fourth moment for $p\ge 4$. As an application we deduce stability estimates for the Khintchine inequality between the $p$th and second moment for $p \geq 4$.
	\end{abstract}
	
	\section{Introduction}
		
	Let $\ve_1, \ldots, \ve_n$ be i.i.d. symmetric Bernoulli random variables, that is, $\p{\ve_i=\pm 1}= \frac12$. For a real vector $a=(a_1,\ldots, a_n)$ we consider the random variable $S_a = \sum_{i=1}^n a_i \ve_i$, sometimes to be denoted by $S$ for simplicity.  The $p$th moment of $S$ is defined as  $\|S\|_p = (\E|S|^p)^{1/p}$, where $p>0$. Let $C_{p,q}$ be the best constant, independent of $a$ and $n$, in the  classical Khintchine inequality
	$
	\|S\|_p \leq C_{p,q} \|S\|_q.
	$

	Let us now briefly discuss the state of the art for the above  inequality. The inequality was first considered a century ago in \cite{K23} by Khintchine in his study of the law of the iterated logarithm and independently by Littlewood \cite{L30} in 1930. Khintchine showed  that the numbers $C_{p,q}$ are finite. By monotonicity of moments we easily see that $C_{p,q}=1$ for $p \leq q$. The best constants $C_{p,q}$ are known when one of the numbers $p,q$ equals $2$, in which case one of the sides of the inequality has a simple form. The optimal constant $C_{p,2}$ for $p>2$ equals $\gamma_p/\gamma_2$, where $\gamma_p=(\E|G|^p)^{1/p}$ for $G \sim \mc{N}(0,1)$. The equality holds asymptotically  when $a_i=n^{-1/2}$ and $n \to \infty$. For the constant $C_{2,q}$ with $q \in (0,2)$ a phase transition occurs, namely there is $q_0 \in (1,2)$  such that for $q_0 \leq q \leq 2$ one has $C_{2,q}=\gamma_2/\gamma_q$, whereas for $0<q \leq q_0$ we have $C_{2,q}=2^{\frac1q - \frac12}$ with equality  for $n=2$ and $a_1=a_2=1$. In fact $q_0$ is the solution to the equation $\Gamma(\frac{q+1}{2})=\frac{\sqrt{\pi}}{2}$, $q_0 \approx 1.84742$. The constant $C_{p,2}$ for $p$ even was found by Khintchine himself in \cite{K23}, whereas the constant $C_{p,2}$ for $p \geq 3$ was established by Whittle in \cite{W60} and independently by Young in \cite{Y76} who was not aware of Whittle's work. Szarek in \cite{S76} showed that  $C_{2,1}=\sqrt{2}$, answering the question of Littlewood from \cite{L30}. The remaining constants $C_{p,2}$ for $p \in (2,3)$ and $C_{2,q}$ for $q \in (0,2) \setminus \{1\}$ were found by Haagerup in his celebrated work \cite{H81} using Fourier methods, see also the article \cite{NP00} of Nazarov and Podkorytov for a simpler proof for $C_{2,q}$ and the article \cite{M17} of Mordhorst for a simpler proof in the case $C_{p,2}$ with $p \in (2,3)$, based on the idea of Nazarov and Podkorytov. In the case of even $p,q$ with $p$ divisible by $q$ the best constants were obtained by Czerwiński in \cite{C08}. In \cite{NO12} the optimal constants $C_{p,q}=\gamma_p / \gamma_q$ for all even numbers $p>q>0$ were found, see also a recent work \cite{HNT23} for an alternative proof. 
	
	One can also introduce the dimension dependent optimal constant $C_{p,q,n}$. By homogeneity  
	\[
	C_{p,q,n} = \max_{a \in S^{n-1}} \ \frac{\|S_a\|_p}{\|S_a\|_q}
	\] 
	and the maximum is achieved for some vector $a^\ast$ by Weierstrass extreme value theorem. A priori, the maximizer $a^\ast$ might not be unique. Plainly
	$C_{p,q} = \sup_{n \geq 1} \ C_{p,q,n}$. Clearly the sequence $C_{p,q,n}$ is non-decreasing, since  for every $a \in \R^n$ we have $S_a = S_{a'}$, where $a'=(a,0) \in \R^{n+1}$. The constants $C_{p,q}$ might or might not be achieved for some finite $n$. In the former case the sequence $(C_{p,q,n})_n$ becomes eventually constant and in the latter we in fact get a strict inequality $\|S\|_p < C_{p,q} \|S\|_q$.
	
	Let us notice that by taking $a=\frac{1}{\sqrt{n}}(1,\ldots, 1)$ with $n \to \infty$ and using central limit theorem one gets $C_{p,q} \geq \gamma_p / \gamma_q$, where $\gamma_p$ is the $p$th moment of a standard Gaussian $\mc{N}(0,1)$ random variable $G$.  The main result of this article reads as follows.
	
	\begin{theorem}\label{thm:cp4}
		For $p \geq 4$ we have $C_{p,4}= \gamma_p/\gamma_4$.
	\end{theorem}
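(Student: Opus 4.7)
The plan is to prove the upper bound $\|S_a\|_p \le (\gamma_p/\gamma_4)\,\|S_a\|_4$ for every vector $a$ and every real $p\ge 4$; the matching lower bound $C_{p,4}\ge \gamma_p/\gamma_4$ was already noted to follow from the central limit theorem applied to $a=(1/\sqrt n,\dots,1/\sqrt n)$. By homogeneity I assume $\sum_i a_i^2=1$, and using the elementary identity $\E S_a^4 = 3 - 2\sum_i a_i^4$ the target becomes
\[
\E|S_a|^p \;\le\; \gamma_p^{\,p}\,\Bigl(1 - \tfrac{2}{3}\textstyle\sum_i a_i^4\Bigr)^{p/4}.
\]
The relevant parameter is thus $s:=\sum_i a_i^4\in[0,1]$, with $s\to 0$ recovering the Gaussian case and $s=1$ the one-atom vector $a=e_1$. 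Direct endpoint checks confirm the inequality at $s=1$ (using $\gamma_p\ge\gamma_4=3^{1/4}$ for $p\ge 4$) and give equality in the Gaussian limit.

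For even integer $p\ge 4$ the inequality is the case $q=4$ of the optimal even-moment Khintchine inequality of Nayar--Oleszkiewicz \cite{NO12}, so I would take this as baseline and concentrate on non-integer $p$. Writing $p=2k+2\theta$ with integer $k\ge 2$ and $\theta\in(0,1)$, I would invoke the Fourier-type representation
\[
|x|^p \;=\; c_p\int_0^\infty\!\Bigl(\cos(tx) - \sum_{j=0}^{k}\frac{(-1)^j (tx)^{2j}}{(2j)!}\Bigr)\frac{dt}{t^{p+1}},
\]
with an explicit constant $c_p$ of definite sign. Taking expectation at $x=S_a$ and using $\E\cos(tS_a)=\prod_i\cos(ta_i)$, the target reduces to a single-variable integral comparison between the Bernoulli characteristic function and its Gaussian analogue $e^{-t^2/2}$, together with polynomial corrections involving the even moments $\E S_a^{2j}$ for $j\le k$; the even-integer baseline from \cite{NO12} controls precisely those polynomial moments.

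The main obstacle I anticipate is sign control on the Fourier side: $\prod_i\cos(ta_i)$ oscillates and can be negative for large $t$, so no pointwise comparison with $e^{-t^2/2}$ is available. The natural remedy is to split the $t$-integration at a threshold chosen so that on the low-frequency part a Taylor expansion in $t^2$ suffices (using only $\sum a_i^2=1$ and $s=\sum a_i^4$), while on the high-frequency part the uniform bound $|\prod_i\cos(ta_i)|\le 1$ and the integrable factor $t^{-p-1}$ absorb the contribution. Pinning the constant to exactly $\gamma_p/\gamma_4$, rather than a weaker constant, should then come from aligning the $k$-fold Taylor subtraction in the Fourier representation with the even-integer case from \cite{NO12}, so that the Gaussian limit is saturated as the CLT predicts.
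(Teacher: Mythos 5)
Your Fourier/Haagerup strategy is genuinely different from the paper's proof (which, for $4\le p\le 8$, uses an integration-by-parts moment identity reducing $\E|S|^p$ to $(p-1)\sum_i a_i^2\,\E|S_i|^{p-2}$ and then invokes Haagerup's $L_{p-2}$-$L_2$ Khintchine bound, and for $p\ge 5$ reduces via a Lagrange-multiplier argument to vectors of the form $(x,1,\dots,1)$ and applies the Nazarov--Podkorytov one-sign-change lemma to a shifted Gaussian). However, as written, there are concrete gaps in the sketch that I do not see how to close.

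First, the claim that after taking expectations ``the target reduces to a single-variable integral comparison'' is false once $p>6$. For $p\in(2k,2k+2)$ the Taylor subtraction brings in $\E S_a^{2j}$ up to $j=k$, and already $\E S_a^6 = 15 - 30\sum_i a_i^4 + 16\sum_i a_i^6$ (under $\sum_i a_i^2=1$) depends on $\sum_i a_i^6$, a quantity the right-hand side $\gamma_p^p\,(1-\tfrac23\sum_i a_i^4)^{p/4}$ does not see; so the problem stays genuinely multivariate in $a$. Second, the proposed high-frequency treatment via $|\prod_i\cos(ta_i)|\le1$ cannot preserve the sharp constant: the inequality you are proving becomes an equality as $a\to(1/\sqrt n,\dots,1/\sqrt n)$, so every estimate must be asymptotically tight in that limit, whereas the crude bound $|\prod_i\cos(ta_i)|\le 1$ is far from the actual Gaussian-like decay $e^{-t^2/2}$ of the product; the resulting tail error does not vanish along that sequence, so you would lose $\gamma_p/\gamma_4$. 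Third, the step meant to ``pin the constant'' by aligning the Taylor subtraction with \cite{NO12} is where the entire difficulty of the theorem is concentrated, and the proposal gives no mechanism for it: \cite{NO12} yields one-sided moment comparisons $\|S_a\|_{2j}\le(\gamma_{2j}/\gamma_4)\|S_a\|_4$, but the Taylor terms enter with alternating signs and must be matched against a $p/4$-power of the fourth moment, which is not obtained by stringing together such one-sided bounds. In short, the two points you flagged as ``obstacles'' are exactly where the proof would have to do its work, and the remedies offered do not resolve them.
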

	
	\noindent In order to prove the above theorem we provide two separate arguments in two different regimes of $p$. The first approach works in the case $p \in [4,8]$, whereas  the second technique is applied in the case $p \geq 5$ and relies on the following proposition.
	
	\begin{proposition} \label{prop:x}
		Let $S_n = \ve_1+\ldots + \ve_n$. We have 
		\[
		C_{p,4,n+1} = \sup_{x \geq 1} \ \frac{\|x+S_n\|_p}{\|x+S_n\|_4}.
		\]
	\end{proposition}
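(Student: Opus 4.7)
The lower bound $C_{p,4,n+1}\ge \sup_{x\ge 1}\|x+S_n\|_p/\|x+S_n\|_4$ is immediate by testing: for each $x\ge 1$ the vector $a=(1,\ldots,1,x)\in\R^{n+1}$ produces $S_a=S_n+x\ve_{n+1}$, and conditioning on the sign of $\ve_{n+1}$ together with the symmetry of $S_n$ gives $\|S_a\|_q=\|x+S_n\|_q$ for every $q>0$, so the ratio equals $f(x):=\|x+S_n\|_p/\|x+S_n\|_4$; scale invariance handles the normalization.

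For the reverse inequality, I fix by compactness an optimizer $a^\ast\in\R^{n+1}$ with $\|a^\ast\|_2=1$, coordinatewise nonnegative by sign symmetry. The central tool is a pairwise swap: for any $i\ne j$, writing $Y:=\sum_{k\ne i,j}a^\ast_k\ve_k$, which is symmetric,
\[
\E|a^\ast_i\ve_i+a^\ast_j\ve_j+Y|^q=\tfrac12\,g_q(a^\ast_i+a^\ast_j)+\tfrac12\,g_q(a^\ast_i-a^\ast_j),\qquad g_q(t):=\E|t+Y|^q,
\]
so the pair $(a^\ast_i,a^\ast_j)$ affects both $\|S_{a^\ast}\|_p^p$ and $\|S_{a^\ast}\|_4^4$ only through its position on the circle $(a^\ast_i)^2+(a^\ast_j)^2=s$. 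Parametrising $a^\ast_i\pm a^\ast_j$ as $\sqrt{2s}\cos\theta$ and $\sqrt{2s}\sin\theta$, and using the algebraic identity $\E S_a^4=3\bigl(\sum_i a_i^2\bigr)^2-2\sum_i a_i^4$ to render the denominator fully explicit in $\sum_i a_i^4$, I would argue that at the optimum the pair is always at a vertex of the arc, i.e.\ $a^\ast_i=a^\ast_j$ or $a^\ast_ia^\ast_j=0$. Iterating this over all pairs collapses $a^\ast$ (after permutation, sign change, and rescaling) into the one-parameter family $(1,\ldots,1,x)$ with $x\ge 0$, whose ratio is $f(x)$.

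It remains to verify that $\sup_{x\ge 0}f(x)$ is attained (or approached) inside $[1,\infty)$. A natural route is to show $f$ is nondecreasing on $[0,1]$ via direct differentiation, using $\tfrac{d}{dx}\|x+S_n\|_q^q=q\,\E[(x+S_n)|x+S_n|^{q-2}]$ and comparing the resulting expressions for $q=p$ against $q=4$; alternatively, one can note that $(1,\ldots,1,x)$ with $x\in(0,1)$ rescales to $(1/x,\ldots,1/x,1)$ and exploit the monotonicity $C_{p,4,k}\le C_{p,4,k+1}$ to absorb the swap of ``which coordinate is singular''. The main obstacle, however, is the vertex-extremality of the pairwise swap itself: proving that on the circle $a_i^2+a_j^2=s$ the ratio $\|S_a\|_p/\|S_a\|_4$ is maximised at $a_i=a_j$ or $a_ia_j=0$ requires a careful comparison of the convexity profiles of $g_p$ and $g_4$, and the tractability of the $q=4$ case through $\|S_a\|_4^4=3-2\|a\|_4^4$ on $S^n$ is exactly what makes this comparison feasible.
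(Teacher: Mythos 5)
Your lower bound is fine and matches the paper's. The upper bound, however, has a real gap, and you yourself flag it: the claimed ``vertex extremality'' of the two-point swap is not established. Worse, I believe it is actually false as stated, and the paper's own result explains why. The paper shows (Lemma \ref{lem: max/min on C}) that the optimizer has the form $P_+ = (a,b,\dots,b)$ with $a > b > 0$. For the pair $(i,j)=(0,1)$ this point sits strictly in the \emph{interior} of the arc $a_0^2+a_1^2 = s$ (neither $a_0 = a_1$ nor $a_0 a_1 = 0$), so the ratio restricted to that arc is maximized at an interior point, not a vertex. Moreover, even if the vertex dichotomy $a_i^\ast = a_j^\ast$ or $a_i^\ast a_j^\ast = 0$ held for every pair, iterating it would force all nonzero coordinates of $a^\ast$ to be equal, collapsing to $(1,\dots,1,0,\dots,0)$ after rescaling --- not the one-parameter family $(1,\dots,1,x)$ with $x\ge 0$ you assert. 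That conclusion doesn't follow from the premise.

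The paper takes a genuinely different route that sidesteps these difficulties. Rather than a two-point swap that fixes only $a_i^2+a_j^2$ (and hence lets the denominator $\|S_a\|_4$ move), it fixes \emph{both} $\sum a_k^2$ and $\sum a_k^4$ simultaneously. Then $\|S_a\|_4$ is literally constant on the constraint set, so one only has to extremize the numerator $\E|S_a|^p$. This requires at least three moving coordinates (the two-point fiber of both constraints is discrete), which is why Lemma \ref{lem: max/min on C} is set up on a genuine two-dimensional manifold $A_{\alpha,\beta}\subset\R^n$ with $n\ge 3$, proven via Lagrange multipliers in a clever change of variables, and extended to higher $n$ by induction through the operator $\Phi\mapsto\Phi_s$. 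The payoff is a clean identification of the maximizer as $(a,b,\dots,b)$ with $a\ge b$, which after scaling directly puts the supremum over $x = a/b \ge 1$ --- no separate monotonicity-on-$[0,1]$ argument is ever needed. Your proposed final step (showing $f$ is nondecreasing on $[0,1]$) is also left unproven and is not obviously easier than the original problem.

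So: the structure (optimizer exists, reduce to a one-parameter family, restrict to $x\ge 1$) is in the right spirit, but the reduction mechanism you chose is the wrong variational move for this problem, and the key step is both unproven and, as an iterated claim, inconsistent with what the optimizer actually looks like.
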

	
	\noindent We conjecture that in fact the above supremum is attained for $x=1$.	
	
	\noindent Theorem \ref{thm:cp4} gives a stability result for the dimension independent classical Khintchine inequality. A more general result was independently obtained in a recent work \cite{J25}, see Theorem 1 therein. 
	
	\begin{cor}
		For any $a \in S^{n-1}$ and $p \geq 4$ we have 
		\[
		\frac{\|S\|_p}{\|G\|_p} \leq 1 - \frac16 \sum_{i=1}^n a_i^4.
		\]
	\end{cor}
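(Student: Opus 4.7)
The plan is to deduce this corollary almost immediately from Theorem \ref{thm:cp4}. The point is that once the dimension-independent $L_p$--$L_4$ constant has been pinned down, the stability estimate for the $L_p$--$L_2$ Khintchine inequality reduces to an \emph{explicit} computation of the fourth moment of $S$ together with a scalar concavity inequality. Concretely, I would rewrite Theorem \ref{thm:cp4} as $\|S\|_p \leq (\gamma_p/\gamma_4)\|S\|_4$, and divide both sides by $\|G\|_p = \gamma_p$ to obtain
\[
\frac{\|S\|_p}{\|G\|_p} \leq \frac{\|S\|_4}{\gamma_4}.
\]
Thus it suffices to show $\|S\|_4/\gamma_4 \leq 1 - \frac{1}{6}\sum_{i=1}^n a_i^4$ whenever $a \in S^{n-1}$.

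Next I would compute $\|S\|_4$ by hand. Expanding $S^4 = (\sum_i a_i \ve_i)^4$ and using $\E \ve_i = 0$, $\ve_i^2 = 1$ together with independence, only tuples $(i,j,k,l)$ in which every index occurs an even number of times contribute. This yields the classical identity
\[
\E S^4 \;=\; \sum_i a_i^4 + 3 \sum_{i \neq j} a_i^2 a_j^2 \;=\; 3 \Bigl(\sum_i a_i^2 \Bigr)^{\!2} - 2 \sum_i a_i^4 \;=\; 3 - 2 \sum_i a_i^4,
\]
where the last equality uses $\|a\|_2 = 1$. Since $\gamma_4^4 = \E G^4 = 3$, this gives the exact formula
\[
\frac{\|S\|_4}{\gamma_4} \;=\; \Bigl( 1 - \tfrac{2}{3} \sum_i a_i^4 \Bigr)^{\!1/4}.
\]

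Finally I would invoke the elementary Bernoulli-type inequality $(1-x)^{1/4} \leq 1 - \tfrac{x}{4}$ for $x \in [0,1]$, which follows from the concavity of $t \mapsto t^{1/4}$. Applying it with $x = \tfrac{2}{3}\sum_i a_i^4$, which lies in $[0, \tfrac{2}{3}] \subseteq [0,1]$ because $\sum_i a_i^4 \leq \sum_i a_i^2 \cdot \max_j a_j^2 \leq 1$, produces $\|S\|_4/\gamma_4 \leq 1 - \tfrac{1}{6}\sum_i a_i^4$, finishing the argument. There is no real obstacle here: the substantive input is Theorem \ref{thm:cp4} itself, and the passage to the stated stability estimate is a short computation using the exact value of $\E S^4$ and a one-variable concavity bound.
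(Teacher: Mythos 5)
Your proof is correct and follows essentially the same route as the paper: apply Theorem \ref{thm:cp4} to reduce to the fourth moment, evaluate $\E S^4 = 3 - 2\sum_i a_i^4$ exactly, and finish with the concavity bound $(1-x)^{1/4} \leq 1 - x/4$. The paper phrases the last step as $(1-x)^q \leq 1-qx$ for $x,q\in[0,1]$, which is the same inequality.
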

	
	\noindent Indeed, let us recall that 
	\[
	\E S_a^4 = 3\left(\sum_{i=1}^n a_i^2\right)^2 - 2\sum_{i = 1}^n a_i^4
	\]
	and $\E G^4 = 3$ which gives
	\[
	\frac{\|S\|_p}{\|G\|_p} \leq \frac{\|S\|_4}{\|G\|_4} = \left( 1- \frac23 \sum_{i=1}^n a_i^4 \right)^{\frac14} \leq 1 - \frac16 \sum_{i=1}^n a_i^4
	\]
	as $(1-x)^q \leq 1-qx$ for $x,q \in [0,1]$. Note that if $a_i=n^{-1/2}$ for $i=1,\ldots, n$ then the right hand side is $1-\frac{1}{6n}$, whereas the left hand side is lower bounded by $1-\frac{p}{2n}$, according to the following proposition.  
	\begin{proposition}\label{prop:Sn_to_G}
		For any positive integer $n$ and $p \geq 3$ we have
		$$\Big\|\frac{S_n}{\sqrt{n}}\Big\|_p \geq e^{-p/2n}\|G\|_p.$$
	\end{proposition}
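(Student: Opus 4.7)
I reduce the claim to $\E|S_n|^p \geq n^{p/2}e^{-p^2/(2n)}\E|G|^p$ and first establish it for even integers $p = 2k$. The key step is the combinatorial expansion
\[
\E S_n^{2k} = \sum_{\pi}(n)_{|\pi|},
\]
where $\pi$ ranges over set partitions of $\{1,\ldots,2k\}$ with all blocks of even size, $|\pi|$ is the number of blocks, and $(n)_m = n(n-1)\cdots(n-m+1)$ is the falling factorial. The contribution from perfect matchings equals $(2k-1)!!\,(n)_k = \E G^{2k}\cdot(n)_k$, and the remaining terms are non-negative. Hence for $k \leq n$,
\[
\E\Bigl(\tfrac{S_n}{\sqrt n}\Bigr)^{2k} \geq \E G^{2k}\cdot\prod_{j=0}^{k-1}\Bigl(1-\tfrac{j}{n}\Bigr),
\]
and the problem reduces to $\prod_{j=0}^{k-1}(1-j/n) \geq e^{-2k^2/n}$. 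Via the Riemann-sum lower bound $\sum_{j=0}^{k-1}\log(1-j/n) \geq \int_0^k\log(1-x/n)\,dx = -k - (n-k)\log(1-k/n)$, this in turn follows from $f(k) := 2k^2/n - k - (n-k)\log(1-k/n) \geq 0$ on $[0,n]$, which I verify by the elementary observations $f(0) = f'(0) = 0$, $f(n) = n > 0$, and that $f''(k) = 4/n - 1/(n-k)$ has a unique zero at $k = 3n/4$, making $f$ convex-then-concave with no interior zero.

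For $k > n$ the pairing bound degenerates; here I use the crude estimate $\E|S_n|^{2k} \geq n^{2k}\cdot \p{|S_n| = n} = n^{2k}\cdot 2^{1-n}$ and check via Stirling's asymptotic $(2k-1)!! \sim \sqrt{2}\,(2k/e)^k$ that it dominates $n^k (2k-1)!!\,e^{-2k^2/n}$; the inequality reduces to $2k^2/n + k - k\log(2k/n) \geq (n-\tfrac12)\log 2$, which is clear once $k > n$. For real $p \in [2k, 2k+2]$ with $k \geq 2$, Lyapunov monotonicity yields $\|S_n/\sqrt n\|_p \geq \|S_n/\sqrt n\|_{2k} \geq e^{-k/n}\|G\|_{2k}$, and it remains to show $\|G\|_p/\|G\|_{2k} \leq e^{(p-2k)/(2n)}$, which I would establish using the Gaussian moment recursion $\|G\|_p^p = (p-1)\|G\|_{p-2}^{p-2}$. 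The remaining range $p \in [3, 4)$ is handled by the integral representation
\[
|x|^p = \frac{(p/2)(p/2-1)}{\Gamma(2 - p/2)}\int_0^\infty\bigl(e^{-tx^2} - 1 + tx^2\bigr)\,t^{-p/2-1}\,dt \qquad (p \in (2,4))
\]
combined with the variance match $\E(S_n/\sqrt n)^2 = 1$ and the bound $\E e^{-t(S_n/\sqrt n)^2} \leq \E e^{-tG^2} = (1+2t)^{-1/2}$, which follows from $\cos(x) \leq e^{-x^2/2}$ after writing $\E e^{-tX^2} = \E_Z\phi_X(\sqrt{2t}\,Z)$ for an independent $Z \sim \mc N(0,1)$.

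The principal obstacle is the regime $p \in [3, 4)$: neither the even-moment expansion nor the Lyapunov interpolation from $p = 4$ covers it, and one must extract the constant $p^2/(2n)$ from a second-order cancellation in the Laplace-transform integrand. Since the second moments of $S_n/\sqrt n$ and $G$ agree, the leading contribution to $\E|G|^p - \E|S_n/\sqrt n|^p$ comes from the quartic term $\E G^4 - \E(S_n/\sqrt n)^4 = 2/n$, and tracing this factor $1/n$ carefully through the Bernstein-type integral is where the proof will be most delicate.
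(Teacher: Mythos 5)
Your even-integer argument---the set-partition expansion of $\E S_n^{2k}$, extraction of the perfect-matching contribution $(2k-1)!!\,(n)_k$, and the Riemann-sum bound $\prod_{j=0}^{k-1}(1-j/n) \geq e^{-2k^2/n}$ via the function $f$---is correct and genuinely different from the paper's route. The paper instead proves a doubling inequality $\|S_{2n}/\sqrt{2n}\|_p \leq e^{p/4n}\|S_n/\sqrt{n}\|_p$ by pairing $\ve_{2i-1}+\ve_{2i}$, conditioning on the number of nonzero pairs, and invoking Ahle's raw-moment bound for the binomial, then telescopes $n \to 2n \to 4n \to \cdots$ to the Gaussian; this handles all real $p\geq 3$ at once. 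The difficulty in your plan begins the moment you leave the even integers.

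The Lyapunov step cannot close the gap. For $p \in (2k,2k+2)$ you reduce to showing $\|G\|_p/\|G\|_{2k} \leq e^{(p-2k)/(2n)}$, but the left-hand side is a constant strictly greater than $1$ that does not depend on $n$ (e.g.\ $\gamma_5/\gamma_4 \approx 1.10$), while the right-hand side tends to $1$ as $n\to\infty$; the inequality is therefore false for all large $n$. The root cause is that $\|S_n/\sqrt n\|_p \geq \|S_n/\sqrt n\|_{2k}$ discards a fixed multiplicative constant, not an $O(1/n)$ one, and no $n$-independent Gaussian moment recursion can recover it. Separately, your $p\in[3,4)$ argument invokes $\cos x \leq e^{-x^2/2}$, which is false: at $x=2\pi$ one has $\cos x = 1$ while $e^{-2\pi^2}\approx 2.7\times 10^{-9}$, so $\E e^{-t(S_n/\sqrt n)^2}\leq \E e^{-tG^2}$ does not follow pointwise from the characteristic-function identity as stated. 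And as you yourself note, even granting that Laplace-transform comparison, extracting the quantitative $e^{-p^2/2n}$ factor from the Bernstein integral remains open. As written the argument rigorously covers only even $p=2k$ with $2\le k\le n$ (the $k>n$ branch uses a Stirling \emph{asymptotic} rather than a two-sided bound), and the passage to general real $p\ge 3$ is missing.
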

	
	\noindent This shows optimality of our stability result in any fixed dimension, up to a constant depending only on $p$. 
	
	The article is organized as follows. In the next section we prove Theorem \ref{thm:cp4} in the case $p \in [4,8]$. In Section \ref{sec:p>5} we derive Theorem \ref{thm:cp4} in the case $p \geq 5$ from Proposition \ref{prop:x}. Section \ref{sec:prop} is devoted to the proof of Proposition \ref{prop:x}. Finally, in Section \ref{sec:opt} we prove Proposition \ref{prop:Sn_to_G}.

	\section{Proof of Theorem \ref{thm:cp4} for $4 \leq p \leq 8$}
	
	\noindent Let us first prove two lemmas.

	\begin{lemma}\label{lem:KO-1}
		For every ${\mathcal{C}}^{1}$ function $f: \R \to \R$ and for every Rademacher sum $S=\sum_{i=1}^{n} a_{i}\ve_{i}$, 
		\[
		\E[Sf(S)]=\sum_{i=1}^{n} a_{i}^{2}\,\E\big[f'(S_{i})\big],
		\]
		where 
		\[
		S_{i}=a_{1}\ve_{1}+\ldots+a_{i-1}\ve_{i-1}+a_{i}U+a_{i+1}\ve_{i+1}+\ldots+a_{n}\ve_{n}
		\] 
		for $1 \leq i \leq n$ and $U$ is a random variable distributed uniformly on $[-1,1]$ and independent of the sequence $\ve_{1}$, $\ve_{2}, \ldots, \ve_{n}$. In particular, for $p\geq 2$,  we obtain
		\[
		\E|S|^{p}=(p-1) \cdot \sum_{i=1}^{n} a_{i}^{2}\,\E|S_{i}|^{p-2}.
		\]
	\end{lemma}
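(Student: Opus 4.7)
The plan is to reduce the identity to the one-dimensional fundamental theorem of calculus by conditioning on all $\ve_j$ with $j \neq i$. By linearity, $\E[S f(S)] = \sum_{i=1}^n a_i\,\E[\ve_i f(S)]$, so it suffices to compute $\E[\ve_i f(S)]$ for each fixed $i$. Writing $T_i = \sum_{j \neq i} a_j \ve_j$ and applying the elementary identity $\E[\ve h(\ve)] = \tfrac12(h(1) - h(-1))$ to the conditional expectation given $(\ve_j)_{j \neq i}$, I obtain
\[
\E[\ve_i f(S)] = \tfrac12\,\E\!\left[f(T_i + a_i) - f(T_i - a_i)\right].
\]

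Next, because $f \in \mathcal{C}^1$, the fundamental theorem of calculus together with the substitution $t = a_i u$ yields
\[
f(T_i + a_i) - f(T_i - a_i) = a_i \int_{-1}^{1} f'(T_i + a_i u)\,\dd u = 2 a_i\,\E_U\!\bigl[f'(T_i + a_i U)\bigr],
\]
where $U$ is uniform on $[-1,1]$ and independent of $\ve_1, \ldots, \ve_n$. Since $T_i + a_i U = S_i$, Fubini's theorem then gives $\E[\ve_i f(S)] = a_i\,\E[f'(S_i)]$, and substituting this back into the sum produces the claimed identity.

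For the specialization to $p \geq 2$, I would apply the first part to $f(x) = x|x|^{p-2}$, which is $\mathcal{C}^1$ on $\R$ precisely under this assumption, with $f'(x) = (p-1)|x|^{p-2}$ (interpreted as the constant $1$ if $p = 2$), while $S f(S) = |S|^{p}$; the second formula then drops out directly. The whole argument is a standard Rademacher "integration by parts" and I do not foresee any genuine obstacle; the only technicality worth a brief remark is the $\mathcal{C}^1$ regularity of $x \mapsto x|x|^{p-2}$ at the origin, which holds exactly because $p \geq 2$.
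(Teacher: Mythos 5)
Your proof is correct and follows essentially the same route as the paper's: split $\E[Sf(S)]$ by linearity, condition on the remaining signs to reduce $\E[\ve_i f(S)]$ to a difference $f(T_i+a_i)-f(T_i-a_i)$, invoke the fundamental theorem of calculus to rewrite that difference as an integral of $f'$, and recognize the resulting average as an expectation over the uniform variable $U$; the second claim then follows by plugging in $f(x)=x|x|^{p-2}=|x|^{p-1}\sgn(x)$. The only cosmetic difference is that you perform the substitution $t = a_i u$ explicitly to land directly on the uniform$[-1,1]$ average, whereas the paper leaves the integral over $[-a_i,a_i]$ and identifies it with the law of $a_iU$; this is the same computation.
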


	\begin{proof}
		We have
		\[
		\E[Sf(S)]=\sum_{i=1}^n a_{i}\,\E[\ve_{i}f(S)]
		\]
		and
		\begin{align*}
			\E[\ve_{i}f(S)]& =\frac{1}{2}\,\E\big[f(a_{1}\ve_{1}+\ldots+a_{i-1}\ve_{i-1}+a_{i}+a_{i+1}\ve_{i+1}+\ldots+a_{n}\ve_{n}) \\
			& \qquad \qquad -
			f(a_{1}\ve_{1}+\ldots+a_{i-1}\ve_{i-1}-a_{i}+a_{i+1}\ve_{i+1}+\ldots+a_{n}\ve_{n})\big] \\
			& =\frac{1}{2}\,\E\int_{-a_{i}}^{a_{i}} f'(a_{1}\ve_{1}+\ldots+a_{i-1}\ve_{i-1}+t+a_{i+1}\ve_{i+1}+\ldots)\,\ud t
			=a_{i} \cdot \E\big[f'(S_{i})\big].
		\end{align*}
		Taking $f(x)= |x|^{p-1} \sgn(x)$, so that $xf(x)=|x|^{p}$ and $f'(x)=(p-1)|x|^{p-2}$, gives the second part.
	\end{proof}

	\begin{lemma}\label{lem:KO-2}
		Let $a_{1}, a_{2}, \ldots, a_{n}$ be such that $\sum_{i=1}^{n} a_{i}^{2}=1$. Then
		\[
		\sum_{i=1}^{n} a_{i}^{2}\Big(1-\frac{2}{3}a_{i}^{2}\Big)^{3} \leq \Big(1-\frac{2}{3}\sum_{i=1}^{n} a_{i}^{4}\Big)^{2}.
		\]
	\end{lemma}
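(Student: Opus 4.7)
The plan is to prove the inequality by an elementary expansion followed by a regrouping into a manifest sum of non-negative terms. Writing $x_i := a_i^2 \in [0,1]$ with $\sum_i x_i = 1$, and setting $T = \sum_i x_i^2$, $U = \sum_i x_i^3$ and $V = \sum_i x_i^4$, I would first expand $(1 - \tfrac{2}{3} x_i)^3$ on the left-hand side and $(1 - \tfrac{2}{3} T)^2$ on the right-hand side. After collecting terms, the desired inequality becomes equivalent to the polynomial inequality
\[
18 U \;\leq\; 6 T^2 + 9 T + 4 V.
\]

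The key idea in handling this reduced form is to exploit the normalization $\sum_j x_j = 1$ in order to rewrite the lower-degree sums as double sums,
\[
9T = 9 \sum_{i,j} x_i^2 x_j, \qquad 18 U = 18 \sum_{i,j} x_i^3 x_j, \qquad 6T^2 = 6 \sum_{i,j} x_i^2 x_j^2.
\]
I would then split each such double sum into its diagonal ($i=j$) and off-diagonal ($i \neq j$) parts, and use once more the identity $9 U = 9 V + 9 \sum_{i \neq j} x_i^3 x_j$ to absorb the diagonal imbalance. A direct regrouping of the difference $6T^2 + 9T + 4V - 18U$ should produce
\[
\sum_{i} x_i^4 \;+\; \sum_{i \neq j} x_i^2 x_j \bigl[ 9(1 - x_i) + 6 x_j \bigr].
\]

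Once this decomposition is in place, the proof is immediate: since $x_i \in [0,1]$ implies $9(1 - x_i) \geq 0$, and $x_j \geq 0$ implies $6 x_j \geq 0$, every summand above is non-negative, and the inequality follows termwise. I do not expect a serious obstacle here: no auxiliary tool (convexity, power-mean, Cauchy--Schwarz) is required. The only subtle point is the bookkeeping in the second step, namely identifying that the cubic sum $9U$ must be redistributed in exactly the above way so that, after peeling off the diagonal contributions, the remaining brackets come out with the correct sign.
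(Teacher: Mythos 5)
Your proposal is correct; I verified the algebra. Starting from $x_i=a_i^2$, both sides reduce (after multiplying through by $27/2$) to the polynomial inequality $18U\le 6T^2+9T+4V$, your use of $\sum_j x_j=1$ to homogenize, and the regrouping into $V+\sum_{i\neq j}x_i^2x_j\bigl[9(1-x_i)+6x_j\bigr]\ge 0$ all check out, with $1-x_i\ge 0$ following from $x_i\in[0,1]$ under the normalization.

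The route, however, is genuinely different from the paper's. In the paper's notation $s_\alpha=\sum_i a_i^\alpha$ (so that $T=s_4$, $U=s_6$, $V=s_8$), the printed proof does not homogenize into double sums; it instead applies two one-line power-sum estimates, $s_6\le\tfrac12(s_4+s_8)$ (AM--GM on $a_i^4,a_i^8$) and $s_8\le s_4^2$, then finishes by noting $\tfrac{10}{27}\le\tfrac49$. Your approach is a manifestly non-negative (SOS-style) decomposition after homogenizing by $(\sum_j x_j)$; the paper's approach trades that for two elementary scalar inequalities applied termwise and a crude coefficient bound. Both are elementary and short, and both exploit the normalization $\sum a_i^2=1$ only at the outer level (to eliminate $s_2$). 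Your version makes the nonnegativity of every piece explicit, at the cost of the double-sum bookkeeping you already flagged; the paper's is a bit slicker but less transparent about where the slack lies. Neither is more general than the other in a substantive way.
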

	
	\begin{proof}
		For  $\alpha>0$ let us denote $\sum_{i=1}^{n} a_{i}^{\alpha}$ by $s_{\alpha}$. Since $x^6 \leq \frac12 (x^4+x^8)$ for any real number $x$, we get $s_6 \leq \frac12(s_4+s_8)$. Clearly we also have $s_8 \leq s_4^2$.  Therefore
		\begin{align*}
			\sum_{i=1}^{n} a_{i}^{2}\Big(1-\frac{2}{3}a_{i}^{2}\Big)^{3} &  = 1-2s_4 + \frac43 s_6 - \frac{8}{27}s_8 \leq 1-2s_4 + \frac23 (s_4+s_8) - \frac{8}{27}s_8  \\
			& \leq 1 - \frac43 s_4 + \frac{10}{27} s_8 \leq 1 - \frac43 s_4 + \frac{10}{27} s_4^2 \\
			&   \leq 1 - \frac43 s_4 + \frac{4}{9} s_4^2 =   \Big(1-\frac{2}{3}\sum_{i=1}^{n} a_{i}^{4}\Big)^{2}.
		\end{align*}
	\end{proof}
	
	\begin{proof}[Proof of Theorem \ref{thm:cp4}  for $4 \leq p \leq 8$]
		We assume that $\sum_{i=1}^n a_i^2 =1$, in which case our goal reduces to
		\[
		\E|S|^{p} \leq \E|G|^{p} \cdot \left(1-\frac{2}{3}\sum_{i=1}^{n} a_{i}^{4}\right)^{\frac p4}.
		\]
		Using Lemma \ref{lem:KO-1} and its notation, and taking advantage of the fact that
		$(p-1)\,\E|G|^{p-2}=\E|G|^{p}$, we obtain
		\begin{align*}
			\E|S|^{p} & =(p-1)\sum_{i=1}^{n} a_{i}^{2}\,\E|S_{i}|^{p-2}
			\leq (p-1)\sum_{i=1}^{n} a_{i}^{2}\,\E|G|^{p-2}\,\big(\E S_{i}^{2}\big)^{\frac{p-2}{2}} \\
			& =\E|G|^{p}\,\sum_{i=1}^{n} a_{i}^{2}\,\big(\E S_{i}^{2}\big)^{\frac{p-2}{2}}
			=\E|G|^{p}\,\sum_{i=1}^{n} a_{i}^{2}\,\Big(1-\frac{2}{3}a_{i}^{2}\Big)^{\frac{p-2}{2}},
		\end{align*}
		where we have used Haagerup's moment inequality for Rademacher sums; note that $p-2\geq 2$. While $S_{i}$ is not a genuine Rademacher sum,
		since one can express $U$ as $\sum_{j=1}^{\infty} 2^{-j}\tilde{\ve}_j$ , where $(\tilde{\ve}_j)_{j=1}^{\infty}$ is a Rademacher sequence independent of $(\ve_{i})_{i=1}^{\infty}$, the Haagerup moment estimates still apply to $S_{i}$ (passing to limit is trivial here). Thus, 
		\[
		\frac{\E|S|^{p}}{\E|G|^{p}}  \leq \sum_{i=1}^n a_i^2 \left(1 - \frac23 a_i^2\right)^{\frac{p-2}{2}} \le \left(1-\frac23 \sum_{i=1}^n a_i^4\right)^{\frac p4},
		\]
		where the last inequality follows from the fact that its sides are, respectively, log-convex and log-affine functions of $p\in [4,8]$, and thus it is enough to check it only for $p=4,8$, in which case the inequality follows from  Lemma \ref{lem:KO-2} for $p=8$ and holds with equality for $p=4$.
	\end{proof}


	\section{Proof of Theorem \ref{thm:cp4} for $p \geq 5$}\label{sec:p>5}
	
	\noindent In order to deduce Theorem \ref{thm:cp4} from Proposition \ref{prop:x} we need the following lemma.
	
	\begin{lemma}\label{lem:x-gauss}
		Let $p>q>0$. The function
		\[
		\R \ni y \mapsto \frac{\|y+G\|_p}{\|y+G\|_q}
		\]
		is maximized for $y=0$. 
	\end{lemma}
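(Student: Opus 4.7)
The plan is to reduce the inequality to a monotonicity statement in $\alpha$ and then to a pointwise two-variable sign computation that exploits the symmetry of the Gaussian density about $y$. Let $f(y) = \|y+G\|_p/\|y+G\|_q$. Since $-G$ and $G$ have the same distribution, $f$ is even, so it suffices to check that $f$ is non-increasing on $[0,\infty)$. Differentiating $\log f$ and using the identity $\partial_y \E|y+G|^{\alpha} = \alpha\,\E[(y+G)|y+G|^{\alpha-2}]$, the task reduces to showing that
\[
\psi(\alpha) := \frac{\E[(y+G)|y+G|^{\alpha-2}]}{\E|y+G|^{\alpha}}
\]
is non-increasing in $\alpha > 0$ for every fixed $y > 0$.

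Next I would compute $\psi'(\alpha)$ via the quotient rule together with $\partial_\alpha|x|^{\alpha} = |x|^{\alpha}\log|x|$. The sign of $\psi'(\alpha)$ equals the sign of $N'(\alpha)D(\alpha) - N(\alpha)D'(\alpha)$, with $N$ and $D$ the numerator and denominator of $\psi$ written as integrals against the density $\phi_y$ of $\mc{N}(y,1)$. Rewriting this difference as a double integral over two independent copies $x_1, x_2$ of $y+G$ and symmetrizing in the swap $x_1 \leftrightarrow x_2$ produces an integrand of the form
\[
|x_1 x_2|^{\alpha-1}\bigl(\sgn(x_1)|x_2| - \sgn(x_2)|x_1|\bigr)\bigl(\log|x_1| - \log|x_2|\bigr)\phi_y(x_1)\phi_y(x_2).
\]

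The crucial step is to change variables to $u_i = |x_i|$, $s_i = \sgn(x_i)$ and to sum over the four sign patterns. Using $\phi_y(\pm u) = (2\pi)^{-1/2}e^{-(u^2+y^2)/2}e^{\pm yu}$, the four Gaussian weights recombine via $e^{t}-e^{-t} = 2\sinh t$ to produce, up to a positive prefactor, the kernel
\[
K(u_1, u_2) = (u_1+u_2)\sinh\bigl(y(u_1-u_2)\bigr) - (u_1-u_2)\sinh\bigl(y(u_1+u_2)\bigr).
\]
Both $K(u_1,u_2)$ and $\log u_1 - \log u_2$ are antisymmetric under the swap $u_1 \leftrightarrow u_2$, so their product is symmetric and vanishes on the diagonal. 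The claim therefore follows once $K(u_1,u_2) \le 0$ is established for $u_1 > u_2 > 0$; setting $A = u_1+u_2$ and $B = u_1-u_2$, this reduces to $A\sinh(yB) \le B\sinh(yA)$ for $A > B > 0$, $y>0$, which is exactly the monotonicity of $t \mapsto \sinh(yt)/t$ on $(0,\infty)$.

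The main obstacle is the $2 \times 2$ sum over signs: the argument works precisely because the Gaussian is a density whose four translates $\phi_y(\pm u_1)\phi_y(\pm u_2)$ collapse so cleanly through the exponential identities, yielding hyperbolic sines of the sum and the difference. Once that antisymmetric $\sinh$-kernel is in place, the remaining scalar inequality is elementary. The only routine verification left is that differentiation in $y$ and in $\alpha$ may be brought under the integral sign, which follows from dominated convergence since $|y+G|^{\alpha-1}$ is Gaussian-integrable for every $\alpha > 0$.
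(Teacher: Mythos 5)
Your proposal is correct, and it takes a genuinely different route from the paper's. The paper proves the lemma via the Nazarov--Podkorytov lemma: writing $C=\|y+G\|_q/\|G\|_q$, it compares the Gaussian distribution functions of $|Cx|$ and $|y+x|$, shows that their difference changes sign exactly once by reducing to the claim that a certain transcendental equation has at most two positive roots, and proves that claim via the Hadamard product factorization $\log\cosh(\sqrt t)=\sum_n\log(1+a_n t)$, which makes the left side strictly log-concave in $t$ while the right side is log-affine. You instead differentiate $\log\bigl(\|y+G\|_p/\|y+G\|_q\bigr)$ in $y$, observe that the derivative equals $\psi(p)-\psi(q)$ with $\psi(\alpha)=\E\bigl[(y+G)|y+G|^{\alpha-2}\bigr]/\E|y+G|^{\alpha}$, and show $\psi'\le 0$ by symmetrizing a two-point representation of $N'D-ND'$, summing over the four sign patterns so that the Gaussian weights $\phi_y(\pm u_1)\phi_y(\pm u_2)$ collapse into the kernel $K(u_1,u_2)=(u_1+u_2)\sinh\bigl(y(u_1-u_2)\bigr)-(u_1-u_2)\sinh\bigl(y(u_1+u_2)\bigr)$, and finally invoking the monotonicity of $t\mapsto\sinh(yt)/t$. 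I checked the symmetrization, the sign-collapse into $K$, and the final $\sinh$ inequality; they all go through (the integrability needed to differentiate under the integral sign for $\alpha\in(0,1)$ is, as you say, supplied by Gaussian integrability of $|x|^{\alpha-1}$). What each approach buys: the paper's route is short given the Nazarov--Podkorytov machinery, which is already the standard tool in this corner of the Khintchine literature; your route is self-contained and elementary, avoiding both Nazarov--Podkorytov and the Hadamard factorization, at the cost of a longer computation. Your reduction is also slightly stronger: it shows $y\mapsto\|y+G\|_p/\|y+G\|_q$ is actually non-increasing on $[0,\infty)$ for each fixed $p>q$, not merely maximized at $y=0$.
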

	
	We first show how this lemma implies our main result.
	
	\begin{proof}[Proof of Theorem \ref{thm:cp4}]
		It is enough to show that $C_{p,4} \leq \frac{\gamma_p}{\gamma_4}$, since the reverse inequality holds true, as mentioned earlier. Note that 
		\[
		\lim_{x \to \infty}  \frac{\|x+S_n\|_p}{\|x+S_n\|_4} = \lim_{x \to \infty}  \frac{\|1+\frac{S_n}{x}\|_p}{\|1+\frac{S_n}{x}\|_4} = 1.
		\]
		Since $\|x+S_n\|_p \geq \|x+S_n\|_4$, we get that the function $f_n(x) = \frac{\|x+S_n\|_p}{\|x+S_n\|_4}$ achieves its maximum on $[1,\infty)$ for certain $x_n$. Since by Proposition \ref{prop:x} we have  $C_{p,4,n+1}  = f_n(x_n)$, we see that the sequence $(f_n(x_n))$ is non-decreasing and converges to $C_{p,4}$. By passing to a subsequence we can assume that $\frac{x_n}{\sqrt{n}} \to y \in [0,\infty]$. Let us consider two cases. 
		\vspace{0.2cm}
		
		\noindent \emph{Case 1.} If $y<\infty$ then by homogeneity 
		\[
		f_n(x_n)=  \frac{\|\frac{x_n}{\sqrt{n}}+\frac{S_n}{\sqrt{n}}\|_p}{\|\frac{x_n}{\sqrt{n}}+\frac{S_n}{\sqrt{n}}\|_4} \xrightarrow[n \to \infty]{} \frac{\|y+G\|_p}{\|y+G\|_4}.
		\] 
		Note that passing to the limit is possible due to the fact that if $X_n \to X$ in distribution then $a_n X_n +b_n\to aX+b$  for every sequences $a_n \geq 0$ and $b_n \in \R$ converging respectively to $a$ and $b$. The convergence of $q$th moments (here used with $q=p,4$)  follows from the convergence in distribution since the higher moments are uniformly bounded, e.g.  by Khintchine inequality. Thus by Lemma \ref{lem:x-gauss} we get 
		\[
		C_{p,4} = \lim_{n \to \infty} C_{p,4,n+1} = \lim_{n \to \infty} f_n(x_n) = \frac{\|y+G\|_p}{\|y+G\|_4} \leq \frac{\|G\|_p}{\|G\|_4} = \frac{\gamma_p}{\gamma_4}.
		\]
		
		\noindent \emph{Case 2.} If $y=\infty$ then  by the above mentioned facts, we have $\frac{S_n}{x_n}=  \frac{\sqrt{n}}{x_n} \cdot \frac{S_n}{\sqrt{n}} \to  0$ in distribution and
		\[
		f_n(x_n) = \frac{\|1 + \frac{S_n}{x_n}\|_p}{\|1 + \frac{S_n}{x_n}\|_4} \xrightarrow[n \to \infty]{} 1 \leq \frac{\gamma_p}{\gamma_4}.
		\]
		
	\end{proof}
	
	The proof of Lemma \ref{lem:x-gauss} uses the lemma due to Nazarov and Podkorytov, see \cite{NP00}. In our case, let $\mu$ be a $\sigma$-finite measure on $\R$ and suppose $f:\R \to [0,\infty)$ is measurable. The function 
	\[
	F(t)=\mu(\{x \in \R: \ f(x)>t \}) \qquad (t>0)
	\] 
	is called the distribution function of $f$. For $l>0$ let $\mc{F}_l = \mc F_l(\mu)$ be the space of measurable functions $f:\R \to [0,\infty)$ such that their distribution functions are finite and $\int_\R f^s \dd \mu<\infty$ for all $s>l$. 
	
	\begin{lemma}[Nazarov-Podkorytov]\label{lem:nazarov}
		Let $\mu$ and $\mc F_l$ be as above, and suppose $f,g\in \mc F_l$ have distribution functions $F,G$ such that $F-G$ has one sign-change point $y_0$ and at $y_0$ changes sign from $-$ to $+$. Then
		\[
		\phi(s) = \frac{1}{s y_0^s} \int_\R (f^s-g^s) \dd\mu
		\] 
		is increasing on $(l,\infty)$. In particular, for $s_0>l$, 
		\[
		\int_\R f^{s_0}\dd\mu = \int_\R g^{s_0}\dd\mu  \qquad \implies \qquad \int_\R f^{s} \dd\mu \geq \int_\R g^{s} \dd\mu \qquad \textrm{for all} \ \ s\geq s_0.
		\] 
	\end{lemma}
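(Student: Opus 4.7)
The plan is a two-step calculation: first a layer-cake identity to reshape $\phi$, then differentiation in $s$ to pin down its sign.

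First I would rewrite the numerator of $\phi$. By Fubini, for any $f\in\mathcal{F}_l$ and $s>l$,
\[
\int_\R f^s\,\ud\mu \;=\; s\int_0^\infty t^{s-1}F(t)\,\ud t,
\]
both sides being finite. Subtracting the analogous identity for $g$ and dividing by $sy_0^s$ gives
\[
\phi(s) \;=\; \frac{1}{y_0^s}\int_0^\infty t^{s-1}\bigl(F(t)-G(t)\bigr)\,\ud t.
\]
The substitution $t=uy_0$ turns this into
\[
\phi(s) \;=\; \int_0^\infty u^{s-1}H(u)\,\ud u, \qquad H(u):=F(uy_0)-G(uy_0),
\]
where by hypothesis $H\le 0$ on $(0,1)$ and $H\ge 0$ on $(1,\infty)$.

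Next I would differentiate under the integral to get
\[
\phi'(s) \;=\; \int_0^\infty u^{s-1}\log(u)\,H(u)\,\ud u.
\]
On $(0,1)$ we have $\log u\le 0$ and $H\le 0$; on $(1,\infty)$ both factors are $\ge 0$. The integrand is therefore pointwise non-negative, so $\phi'(s)\ge 0$ and $\phi$ is non-decreasing on $(l,\infty)$. The ``in particular'' clause is then immediate: if $\phi(s_0)=0$ then $\phi(s)\ge 0$ for all $s\ge s_0$, and multiplying by $sy_0^s>0$ yields $\int f^s\,\ud\mu\ge \int g^s\,\ud\mu$.

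The main (and only real) obstacle is justifying differentiation under the integral. Given $s_0>l$, I would fix $\varepsilon>0$ with $s_0-2\varepsilon>l$ and produce a dominating function on $s\in(s_0-\varepsilon,s_0+\varepsilon)$. Near $u=0$, the distribution functions $F,G$ are bounded by the finiteness assumption in the definition of $\mathcal{F}_l$, while $u^{s_0-\varepsilon-1}|\log u|$ is integrable on $(0,1)$; near $u=\infty$, one uses $|\log u|\le C_\varepsilon u^{\varepsilon}$ to reduce the issue to integrability of $u^{s_0+2\varepsilon-1}\bigl(F(uy_0)+G(uy_0)\bigr)$ on $(1,\infty)$, which follows from the layer-cake identity applied to $\int f^{s_0+2\varepsilon}\,\ud\mu$ and its analogue for $g$, both finite since $f,g\in\mathcal{F}_l$. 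Dominated convergence then legitimizes differentiation, completing the argument.
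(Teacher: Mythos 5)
Your proof is correct in spirit and shares the paper's central idea: the layer-cake identity $\int f^s\,\ud\mu = s\int_0^\infty t^{s-1}F(t)\,\ud t$, followed by the observation that the weight factor and $F-G$ change sign at the same point. The paper, however, avoids differentiation under the integral entirely: for $s_1>s_2>l$ it writes
\[
\phi(s_1)-\phi(s_2) = \frac{1}{y_0}\int_0^\infty \left( \left(\frac{y}{y_0}\right)^{s_1-1} - \left(\frac{y}{y_0}\right)^{s_2-1} \right)\bigl(F(y)-G(y)\bigr)\,\ud y \ge 0,
\]
since both factors change sign at $y_0$; the integral is automatically absolutely convergent because $s_1,s_2>l$, so no dominated-convergence argument is needed. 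Your route via $\phi'(s)$ is logically equivalent but carries extra technical weight. In your justification there is also a small error: membership in $\mathcal{F}_l$ only guarantees that $F(t)$ is \emph{finite} for each $t>0$, not that $F$ stays bounded as $t\to 0^+$ (e.g.\ $f(x)=1/x$ on $[1,\infty)$ with Lebesgue measure lies in $\mathcal{F}_1$ but has $F(t)\sim 1/t$ near $0$). The fix is the same trick you already use near $\infty$: bound $|\log u|\le C_\varepsilon u^{-\varepsilon}$ on $(0,1)$ and appeal to the integrability of $u^{s_0-2\varepsilon-1}F(uy_0)$, valid since $s_0-2\varepsilon>l$. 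With that repair your argument goes through, but the paper's direct two-point comparison is the cleaner option.
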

	
	Let us recall the proof for convenience of the reader. 
	
	\begin{proof}
		Fix $s>l$ and note that applying Fubini theorem to the characteristic function of the set $\{(x,t) \in \R \times (0,\infty) \ : \  f(x)>t^{1/s}\}$, we obtain
		\[
		\int_\R f^s \dd \mu = \int_\R \int_0^\infty\1_{\{f(x)>t^{1/s}\}} \dd t\dd\mu(x) = \int_0^\infty F(t^{1/s}) \dd t = s\int_0^\infty y^{s-1}F(y)dy.
		\]
		Therefore,
		\[
		\phi(s) = \frac{1}{y_0} \int_0^\infty \left( \frac{y}{y_0} \right)^{s-1}(F(y)-G(y)) \dd y .
		\]
		Now, suppose $s_1>s_2>l$ and note that
		\[
		\phi(s_1)-\phi(s_2) = \frac{1}{y_0} \int_0^\infty \left( \left( \frac{y}{y_0} \right)^{s_1-1} - \left( \frac{y}{y_0} \right)^{s_2-1} \right) (F(y)-G(y))  \dd y \geq 0,
		\]
		since both factors change their signs in $y_0$.
	\end{proof}

	\begin{proof}[Proof of Lemma \ref{lem:x-gauss}]
		By symmetry we can assume $y>0$. Let $\gamma$ be a standard Gaussian measure in $\R$, that is, a measure with density $\varphi(s) = (2\pi)^{-1/2} e^{-s^2/2}$.  Our goal is to prove the inequality
		\[
		\|y+G\|_p \leq  C \cdot \|G\|_p, \qquad \textrm{where} \ \ C = \frac{\|y+G\|_q}{\|G\|_q} .
		\]
		This can be rewritten as
		\[
		\int_\R |y+x|^p \dd \gamma(x) \leq  \int_\R |Cx|^p \dd \gamma(x).
		\]
		Note that we have equality for $p=q$. Let us take $f(x)=|Cx|$ and $g(x)=|y+x|$. Due to Lemma \ref{lem:nazarov} it is enough to show that for the corresponding  distribution functions $F,G$ the function $h = F-G$, defined on $(0,\infty)$, changes sign only once and is positive for large arguments. We have
		\[
		h(t) = \gamma(\{x: |y+x| \leq t \}) - \gamma(\{x: |Cx| \leq t \}).
		\]
		Note that the function $y \mapsto \|y+G\|_q$ is even and strictly convex and thus it is minimized for $y=0$. It follows that $\|y+G\|_p > \|G\|_p$ for $y \ne 0$ and thus $C>1$. Since $\{x: |Cx| \leq t\} \subseteq \{x: |y+x| \leq t\}$ for $t \geq t_0 = \frac{Cy}{C-1}$, we have  $h(t) \geq 0$ for $t \geq t_0$. Thus, we are left with showing the sign-change property.
		
		We have $\lim_{t\to 0} h(t)=0$ and $\lim_{t \to \infty} h(t)=0$. Moreover, $\int_\R f^q \dd \gamma= \int_\R g^q \dd\gamma$ implies 
		\[
		0 = \int_0^\infty u^{q-1}(F(u)-G(u)) \dd u= \int_0^\infty u^{q-1} h(u) \dd u
		\]
		and thus $h$ must have at least one sign-change point.  By Rolle's theorem it is enough to show that $h'$ has at most two zeros. Note that 
		\[
		h(t) = \int_{-t-y}^{t-y} \vp(s) \dd s - 2 \int_{0}^{\frac{t}{C}} \vp(s) \dd s  	
		\] 
		and thus
		\[
		h'(t) = \vp(t-y)+\vp(-t-y) - \frac{2}{C} \vp(t/C) = \frac{1}{\sqrt{2\pi}} \left( 2 e^{-\frac12(t^2+y^2)} \cosh(ty) - \frac{2}{C} e^{-\frac{t^2}{2C^2}} \right)
		\]    
		The equation $h'(\sqrt{t})=0$ is therefore equivalent with 
		\[
		\cosh(\sqrt{t}y) = \frac{1}{C}e^{\frac12(t+y^2) - \frac{t}{2C^2}}.
		\]
		By the Hadamard product identity one gets
		\[
		\log \cosh(\sqrt{t}) = \sum_{n=1}^\infty \log(1+a_n t), \qquad \textrm{where} \ \ a_n = \frac{4}{\pi^2(2n-1)^2}
		\]
		and thus the left hand side of the above equation is strictly log-concave in $t$ for $y \ne 0$, whereas the right hand side is log-affine. As a result this equation can have at most two solutions.
	\end{proof}

	\section{Proof of Proposition \ref{prop:x}}\label{sec:prop}
	
	Proposition \ref{prop:x} is almost entirely based on the following lemma. 
	\begin{lemma}\label{lem: max/min on C}
		For an integer $n\ge 2$ and real parameters $\alpha,\beta$, let 
		\[
		A_{\alpha,\beta} := \{ (x_1,\dots,x_n) \in \R^n \ : \ x_1^2+\dots+x_n^2=\alpha^2, \ x_1^4+\dots+x_n^4 = \beta^4\}.
		\]
		Suppose that $A_{\alpha,\beta}$ is non-empty. Then: 	
		\begin{enumerate}
			\item[(a)] there exist unique points $P_\pm \in A_{\alpha,\beta}$ such that
			\[
			P_+= (a_+,b_+,\dots,b_+) \quad \text{and} \quad P_-=(b_-,\dots,b_-,a_-, \underbrace{0,\dots,0 }_{k\text{ times}})  .
			\] 
			for some $a_+\ge b_+\ge 0$, $b_- \ge a_- \ge 0$ and some $k \in \{0,\dots,n-2\}$.
			\item[(b)] For every even function $\Phi:\R \to \R$ with convex fourth derivative, the function
			\[
			A_{\alpha,\beta}\ni (a_1,\dots,a_n) \longmapsto \E \Phi(a_1\ve_1+\dots+a_n\ve_n) 
			\]
			attains the maximal value at $P_+$ and the minimal value at $P_-$. If $\Phi^{(4)}$ is strictly convex, there are no other local extrema, up to coordinate reflections and permutation of coordinates.
		\end{enumerate}  
	\end{lemma}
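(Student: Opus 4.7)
Part (a) reduces to direct algebra. For $P_+$, substituting $b_+^2 = (\alpha^2 - a_+^2)/(n-1)$ into the quartic constraint yields a quadratic in $a_+^2$ whose larger root gives the unique $(a_+, b_+)$ with $a_+ \geq b_+ \geq 0$; existence is equivalent to $A_{\alpha,\beta}\neq\emptyset$, i.e.\ $\alpha^4/n \leq \beta^4 \leq \alpha^4$. For $P_-$, running the same analysis on $n' = n-k$ nonzero coordinates (this time selecting the smaller root) gives a valid configuration exactly when $n' \in [\alpha^4/\beta^4,\, 1 + \alpha^4/\beta^4]$; this interval always contains a unique integer, which determines $k$, $a_-$, and $b_-$ uniquely.

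For part (b), the plan is to combine Lagrange multipliers with a three-coordinate reduction. By compactness of $A_{\alpha,\beta}$, the function $F(a) := \E\Phi(\sum a_i\ve_i)$ attains its extrema. At any critical point $a^*$ there are multipliers $\lambda, \mu$ with $\E[\ve_i \Phi'(S_{a^*})] = 2\lambda a^*_i + 4\mu (a^*_i)^3$ for every $i$. Fixing any three indices $i, j, k$ and freezing the rest reduces to a critical-point problem on the one-dimensional curve $\Gamma = \{(x,y,z)\ge 0 : x^2+y^2+z^2 = c_1,\; x^4+y^4+z^4 = c_2\}$. The decisive observation is that on $\Gamma$ the unordered pair $\{y^2, z^2\}$ is uniquely determined by $x$, namely as the roots of
\[
t^2 - (c_1-x^2) t + \tfrac{1}{2}\bigl((c_1-x^2)^2-(c_2-x^4)\bigr) \ = \ 0.
\]
Consequently the averaged function
\[
\tilde L(x) \ := \ \E\bigl[\Phi'\bigl(x + y_x\eta_1 + z_x\eta_2 + T\bigr)\bigr],
\]
with $T$ the frozen tail Rademacher sum and $\eta_1,\eta_2$ independent Rademacher signs, depends on $x$ alone; and by the symmetry of the three chosen variables the three-variable Lagrange system collapses to a single scalar equation $\tilde L(t) = 2\lambda t + 4\mu t^3$ which must hold simultaneously at $t = a^*_i, a^*_j, a^*_k$.

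The heart of the proof is then to show that under convexity of $\Phi^{(4)}$, the equation $\tilde L(t)/t = 2\lambda + 4\mu t^2$ has at most two positive solutions, equivalently that $u \mapsto \tilde L(\sqrt u)/\sqrt u$ is a convex function of $u = t^2$ on the admissible range. Expanding $\tilde L$ via the Taylor series of $\Phi'$ yields
\[
\frac{\tilde L(x)}{x} \ = \ \sum_{n\ge 1} \frac{\Phi^{(2n)}(0)}{(2n-1)!}\, N_n(x^2, y_x^2, z_x^2)
\]
for explicit symmetric polynomials $N_n$; after substituting the constraint expressions for $y_x^2+z_x^2$ and $y_x^2 z_x^2$ in terms of $u=x^2$, one verifies by direct computation that $N_1$ is constant, $N_2$ is affine, and $N_3, N_4$ are convex polynomials in $u$, the higher $N_n$ being controlled by the same mechanism. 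The fourth-derivative convexity of $\Phi$ is precisely the input securing the correct sign pattern in the general term, and this is where I expect the main technical difficulty to lie; one natural route is an integral representation of $\Phi^{(4)}$ as a non-negative combination of convex kernels (for instance $(\cdot-s)_+^2$) that reduces the problem to a class on which the required convexity can be checked directly.

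Granting the at-most-two-roots statement, the nonzero coordinates of any critical point of $F$ on $A_{\alpha,\beta}$ take at most two distinct absolute values, so up to coordinate reflections and permutations $a^* = (c,\ldots,c,d,\ldots,d,0,\ldots,0)$ with some multiplicities $(m,l,k)$. This is a finite family of configurations in $A_{\alpha,\beta}$ (each determined by at most two solutions of a quadratic in $(c,d)$), and a direct comparison of $F$-values identifies $P_+$ as the maximum and $P_-$ as the minimum. Under strict convexity of $\Phi^{(4)}$, the strict version of the two-root statement rules out the three-distinct-values case outright, so up to coordinate reflections and permutations $P_\pm$ are the only local extrema.
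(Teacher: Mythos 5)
Your part (a) matches the paper's argument. For part (b), the Lagrange-multiplier setup and the observation that the three-coordinate system collapses to a single equation $\tilde L(t) = 2\lambda t + 4\mu t^3$ holding at three values are sound, but the argument then rests entirely on the unproved assertion that this equation has at most two positive solutions --- equivalently, that $u\mapsto \tilde L(\sqrt u)/\sqrt u$ is convex on the admissible interval. You flag this yourself as the main difficulty, and neither of the two sketched routes closes it: the Taylor expansion of $\Phi'$ is unavailable for $\Phi(x)=|x|^p$ with $p$ non-integer (not analytic, indeed only finitely differentiable, at the origin), and the integral representation of $\Phi^{(4)}$ by kernels $(|\cdot|-s)_+$ is mentioned but never turned into a proof of convexity of the constrained composite $\tilde L(\sqrt u)/\sqrt u$, which is itself a delicate claim because $y_x,z_x$ are determined only implicitly by the constraints and degenerate at the endpoints of the admissible $u$-range. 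Two further holes remain even granting the two-roots claim: the constraint gradients $(2a_i)_i$ and $(4a_i^3)_i$ become linearly dependent precisely at one-level configurations (all nonzero $|a_i|$ equal), so Lagrange multipliers cannot be invoked there and those points need a separate treatment; and the concluding ``direct comparison of $F$-values'' over all admissible two-level multiplicity patterns $(m,l,k)$ is a genuine computation that is not carried out and is not obviously tractable for general $n$.

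The paper sidesteps all of this by a different device. It reduces to $n=3$ by induction (freezing one coordinate replaces $\Phi$ by $\Phi_s(u)=\tfrac12(\Phi(u+s)+\Phi(u-s))$, a class closed under the hypothesis) and then applies the linear change of variables $(x,y,z)\mapsto(a,b,c,d)=(x+y+z,\,x-y-z,\,-x+y-z,\,-x-y+z)$, under which the objective becomes $\tfrac14(\Phi(a)+\Phi(b)+\Phi(c)+\Phi(d))$ and the constraints become $a+b+c+d=0$, $a^2+b^2+c^2+d^2=4$, $abcd=2\gamma-1$. The Lagrange condition there reads $u\Phi'(u)=\tau_1+\tau_2 u+\tau_3 u^2$ at the four values $a,b,c,d$; using $a+b+c+d=0$ and $abcd\ne 0$ (valid off the special points) one replaces the right side by an even quartic $\eta_1+\eta_2 u^2+\eta_3 u^4$, so evenness of $u\Phi'(u)$ yields at least seven distinct solutions $\pm a,\pm b,\pm c,\pm d$. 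Three applications of Rolle's theorem then give at least four zeros of $(u\Phi'(u))^{(3)}-24\eta_3 u$, contradicting a short three-solutions lemma that needs only strict convexity of $\Phi^{(4)}$. The gain of the four-variable picture is precisely this doubling of the root count, which your three-real-coordinate framing cannot supply; that is why your approach is forced into the much stronger, and unverified, convexity statement about $\tilde L$.
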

	
	\begin{remark}
		A small subtlety concerning the definition of $P_-$ is worth mentioning. Though the point $P_-$ is uniquely determined, numbers $b_{-},a_{-}$ and $k$ may not be. In most cases, the latter will also be true, however, if $\frac{\alpha^4}{\beta^4}\in \{2,\dots,n-1\}$, the point $P_-$ will be of the form $(b,\dots,b,0\dots,0)$ and one can take both $b_{-} = a_{-}=b$ and $b_{-}=b, a_{-}=0$. This is essentially the only ambiguity, which of course does not happen in the case of $P_+$.
	\end{remark}
	
	Let us now deduce Proposition \ref{prop:x} from Lemma \ref{lem: max/min on C}.
	
	\begin{proof}[Proof of Proposition \ref{prop:x}, assuming Lemma \ref{lem: max/min on C}]
		As we mentioned, $C_{p,4,n+1}$ is attained for some $S = \sum_{i=0}^n a_i\ve_i$, thus it is enough to prove that
		\[
		\frac{\|S\|_p}{\|S\|_4} \le \sup_{x\ge 1} \frac{\|x+S_n\|_p}{\|x+S_n\|_4}
		\]
		Notice that for $p\ge 5$ the function $\Phi(x) = |x|^p$ is even and $\Phi^{(4)}(x) = p(p-1)(p-2)(p-3)|x|^{p-4}$ is convex. Applying Lemma \ref{lem: max/min on C}, there exist $a\ge b\ge 0$ such that
		\[
		\E |S|^p \le \E |a \ve_0 +bS_n|^p \qquad \text{ and } \quad \begin{cases}
			a_0^2+\dots+a_n^2 = a^2 +nb^2 \\
			a_0^4+\dots +a_n^4 = a^4 + nb^4
		\end{cases} 
		\]
		Thus, $\|S\|_p \le \|a\ve_0+bS_n\|_p$ and by the formula for the fourth moment of a Rademacher sum we also have $\|S\|_4 = \|a\ve_0 + bS_n\|_4$. Therefore, for $b\not =0$, since $\frac ab \ge 1$, we obtain
		\[
		\frac{\|S\|_p}{\|S\|_4}\le \frac{\|a\ve_0 + bS_n\|_p}{\|a\ve_0 + bS_n\|_4} = \frac{\|\frac ab \ve_0 +S_n\|_p}{\|\frac ab \ve_0 +S_n\|_4} \le \sup_{x\ge 1} \frac{\|x+S_n\|_p}{\|x+S_n\|_4}
		\]
		If $b=0$, then $\frac{\|S\|_p}{\|S\|_4}\le1$, which gives the required inequality due to the monotonicity of norms.
	\end{proof}
	
	\subsection{The set $A_\gamma$ and its special points}\label{sub: set A_gamma} We will consider the following family of sets
	\begin{equation}\label{eq:C2}
		A_\gamma := \{(x,y,z) \in \R^3 \ : \ x^2+y^2+z^2=1, \ x^4+y^4+z^4=\gamma \}
	\end{equation}
	for $\frac13\le \gamma\le 1 $. Since for any point $(x,y,z)$ on the unit sphere $S^2\subseteq \R^3$, we have 
	\begin{equation}\label{ineq x,y,z} \small
		\frac13 =\frac{(x^2+y^2+z^2)^2}{3}\le x^4+y^4+z^4 \le (x^2+y^2+z^2)^2=1,
	\end{equation} 
	this is simply a partition of $S^2$ into level-sets of $S^2\ni (x,y,z) \mapsto x^4+y^4+z^4$. Below we include the pictures of $A_\gamma$, depending on the value of $\gamma \in (\frac13,1)$. 
	
	\begin{figure}[h]
		\centering
		\includegraphics[scale=0.35]{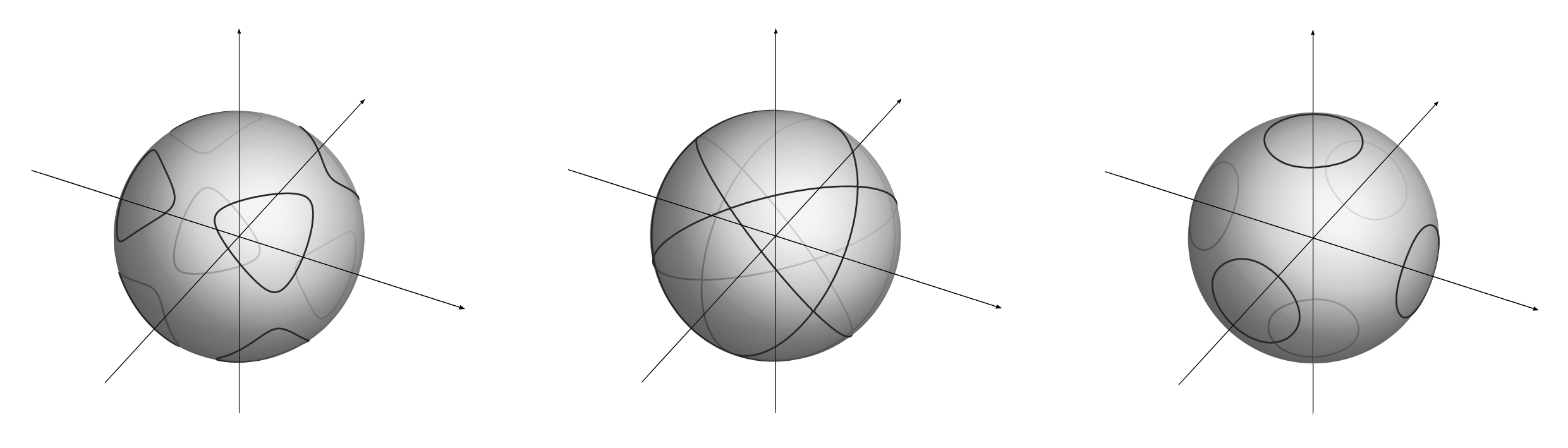}
		\caption{The sets $A_\gamma$ for $\gamma \in(\frac13,\frac12)$, $\gamma=\frac12$, and $\gamma \in (\frac12,1)$.}
		\label{fig:kule}
	\end{figure}
	
	We can see that $A_\gamma$ are invariant under permuting coordinates and coordinate reflections, i.e. invariant under action of the full octahedral group. We shall call these transformations \emph{symmetries} and consider set of points on $S^2$ with non-trivial stabilizer, called \emph{special points} and denoted by $\mc S$ in the sequel. The fundamental domain is given by $\mc F =\{(x,y,z) \in S^2 \ : \ x\ge y\ge z\ge 0\}$ and the representatives of special points are the boundary points. On this boundary, there are exactly two (representative) paths connecting the points $(1,0,0)$ and $(\frac1{\sqrt3}, \frac1{\sqrt3}, \frac1{\sqrt3})$, namely 
	\[
	\mc S^{-} = \mc F \cap (\{z=0\} \cup \{x=y\}), \qquad   \mc S^{+} = \mc F \cap \{y=z\} .
	\]
	Note that $(\frac{1}{\sqrt2}, \frac1{\sqrt2}, 0) \in  \mc S^{-}$.	We will also consider special points on $A_\gamma$ for $\gamma \in [\frac13,1]$, namely
	\[
	\mc S_\gamma = A_\gamma \cap \mc{S}, \qquad \qquad \mc S_\gamma^\pm = A_\gamma \cap \mc{S}^\pm.
	\]
	The characterization of these special points is gathered in the following elementary lemma. 
	
	\begin{lemma}\label{lem: special points}
		Let $A_\gamma$ be the set defined in \eqref{eq:C2} with $\gamma \in [\frac13,1]$, and let $\mc S_\gamma,\mc S_\gamma^\pm$ denote the (sub)sets of special points on $A_\gamma$, as defined above. Then
		\begin{enumerate}
			\item[(a)]  $(x,y,z) \in S^2$ is not a special point if and only if $|x|,|y|,|z|$ are non-zero and pairwise distinct.
			\item[(b)] for $\gamma =\frac13$ we have $A_\gamma = \mc S_\gamma = \mc S_\gamma^\pm = \{(\frac1{\sqrt3}, \frac1{\sqrt3}, \frac1{\sqrt3})\}$, up to symmetries, 
			\item[(c)]  for $\gamma = 1$ we have $A_\gamma = \mc S_\gamma = \mc S_\gamma^\pm = \{(1,0,0)\}$, up to symmetries, 
			\item[(d)] for $\gamma \in (\frac13,1)$ the set $\mc S_\gamma$ contains, up to symmetries, exactly one point in $\mc S_\gamma^+$ and one point in $\mc S_\gamma^-$. The representatives of special points are:
			{
				\[
				\mc S_\gamma^+	\ni s_\gamma^+  := \left( \sqrt{\frac{1+\sqrt{6\gamma-2}}{3}}, \sqrt{\frac{2-\sqrt{6\gamma-2}}{6}},\sqrt{\frac{2-\sqrt{6\gamma-2}}{6}} \right)  \quad \text{ for } \gamma \in \left(\frac13,1\right)
				\]
			}
			\[
			\mc S_\gamma^-	\ni s_\gamma^-  := \begin{cases}
				\left( \sqrt{\frac{2+\sqrt{6\gamma-2}}{6}},\sqrt{\frac{2+\sqrt{6\gamma-2}}{6}}, \sqrt{\frac{1-\sqrt{6\gamma-2}}{3}} \right) & \text{ for } \gamma \in \left(\frac13,\frac12\right] \\
				\left( \sqrt{\frac{1+\sqrt{2\gamma-1}}{2}},\sqrt{\frac{1-\sqrt{2\gamma-1}}{2}},0 \right) &\text{ for } \gamma \in \left(\frac12,1\right)
			\end{cases}
			\]
			In particular,  $s_{1/2}^+ = \left( \sqrt{\frac23}, \frac{1}{\sqrt 6}, \frac1{\sqrt 6}\right)$ and $s_{1/2}^- = \left(\frac1{\sqrt2}, \frac1{\sqrt2}, 0\right)$.
		\end{enumerate}	
	\end{lemma}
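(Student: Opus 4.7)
My plan is to argue each of the four parts essentially by brute force, since the lemma is a concrete computation dressed up in group-theoretic language. The full octahedral group acts on $\R^3$ as $(x_1,x_2,x_3)\mapsto (\ve_1 x_{\sigma(1)},\ve_2 x_{\sigma(2)}, \ve_3 x_{\sigma(3)})$ for $\sigma\in S_3$ and $\ve_i\in\{\pm 1\}$, so the orbit of a generic point has $48$ elements and non-trivial stabilizers appear only in coincidence cases.

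For part (a), I would check the two sources of non-trivial stabilizer directly. A sign-flip in the $i$th coordinate fixes $(x,y,z)$ precisely when $x_i=0$, and a non-identity permutation combined with suitable signs fixes $(x,y,z)$ exactly when two of the absolute values $|x|,|y|,|z|$ coincide (e.g.\ if $|x|=|y|$, the transposition of the first two coordinates composed with the sign flips that align signs fixes the point). Conversely, if all three absolute values are distinct and non-zero, I would observe that any symmetry fixing the point must preserve the multiset $\{|x|,|y|,|z|\}$ pointwise, which forces $\sigma=\mathrm{id}$, and then the signs must all be $+1$, so the stabilizer is trivial.

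For parts (b) and (c), the chain \eqref{ineq x,y,z} shows that $\gamma=\frac13$ and $\gamma=1$ are the extreme values, saturated only in the equality cases of Cauchy--Schwarz / power-mean inequalities; I would just unpack those equality conditions to get $|x|=|y|=|z|=\frac{1}{\sqrt 3}$ in the first case and one nonzero coordinate in the second.

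The main work is part (d), but it is purely algebraic. For $\mc S_\gamma^+$ I substitute $y=z$ in the two defining equations and put $u=x^2$, $v=y^2$; the system becomes $u+2v=1$, $u^2+2v^2=\gamma$, which reduces to the quadratic $6v^2-4v+(1-\gamma)=0$ with discriminant $24\gamma-8$. The constraint $x\ge y\ge 0$ (fundamental domain) forces $v\le\frac13\le u$, so only the smaller root $v=\frac{2-\sqrt{6\gamma-2}}{6}$ is admissible; plugging back gives $u=\frac{1+\sqrt{6\gamma-2}}{3}$ and the claimed $s_\gamma^+$. For $\mc S_\gamma^-$, I split according to which boundary segment of $\mc F$ the point sits on. On $\{x=y\}$ the same substitution yields the same quadratic $6u^2-4u+(1-\gamma)=0$ for $u=x^2$, but now the ordering constraint $u\ge \frac13\ge 1-2u$ selects the larger root $u=\frac{2+\sqrt{6\gamma-2}}{6}$, and the resulting $w=1-2u$ is nonnegative precisely when $\gamma\le\frac12$. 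On $\{z=0\}$, writing $u=x^2$, $v=y^2$ gives the elementary system $u+v=1$, $u^2+v^2=\gamma$; the identity $(u-v)^2=2\gamma-1$ together with $u\ge v\ge 0$ forces $\gamma\ge\frac12$ and gives the explicit values $u=\frac{1+\sqrt{2\gamma-1}}{2}$, $v=\frac{1-\sqrt{2\gamma-1}}{2}$. Finally, at $\gamma=\frac12$ the two formulas for $s_\gamma^-$ agree and produce the common point $(\tfrac{1}{\sqrt 2},\tfrac{1}{\sqrt 2},0)$, so the piecewise definition is continuous; the specialization $s^{\pm}_{1/2}$ follows by plugging $\gamma=\frac12$ into the formulas. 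I expect no real obstacle: the only care needed is in matching the sign choice of the square root to the ordering inequality that defines the fundamental domain, and in verifying that the two descriptions of $\mc S_\gamma^-$ overlap consistently at $\gamma=\frac12$.
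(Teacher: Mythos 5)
Your proposal is correct and matches the paper's own proof in essentially every step: part (a) by identifying which reflections/permutations can fix a point, parts (b) and (c) from the equality conditions in \eqref{ineq x,y,z}, and part (d) by solving the same pair of $2\times 2$ systems (for points of the form $(a,b,b)$ and $(a,b,0)$), selecting the admissible root via the fundamental-domain inequalities, and checking the two branches of $s_\gamma^-$ match at $\gamma=\tfrac12$. The only cosmetic difference is your choice of variable names and the explicit display of the quadratic $6v^2-4v+(1-\gamma)=0$; the paper just states the resulting roots.
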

	
	\begin{proof}
		Point (a) follows immediately from the fact that points with a zero coordinate are exactly the points invariant under certain coordinate reflection and the points with two equal coordinates  are points invariant under certain permutation.  
		
		Points (b) and (c) follow from the equality conditions in \eqref{ineq x,y,z}.
		
		For (d) let us consider the system of equations, corresponding to  special points with two equal coordinates, 
		\[
		\begin{cases}
			a^2+2b^2 = 1 \\
			a^4+2b^4 = \gamma,
		\end{cases}   \qquad a,b \geq 0.
		\]
		Here $(a^2,b^2)=(\frac{1+\sqrt{6\gamma-2}}{3},\frac{2-\sqrt{6\gamma-2}}{6})$ is always the solution with non-negative coordinates satisfying $a^2 \geq b^2$ for all $\gamma \in(\frac13,1)$, which leads to the point $s_\gamma^+$. The second solution $(a^2,b^2)=(\frac{1-\sqrt{6\gamma-2}}{3},\frac{2+\sqrt{6\gamma-2}}{6})$ has real positive coordinates only if $\gamma \in (\frac13,\frac12]$ and in this case $b^2 \geq a^2$, which leads to the point $s_\gamma^-$ in this range of $\gamma$. The second type of special points are the points of the form $(a,b,0)$ with $a \geq b$, which corresponds to the system of equations
		\[
		\begin{cases}
			a^2+b^2 = 1 \\
			a^4+b^4 = \gamma,
		\end{cases}   \qquad a^2 \geq b^2 \geq 0.
		\]
		The solution exists only for $\gamma \in [\frac12,1]$ and is equal to  $(a^2,b^2)=(\frac{1+\sqrt{2\gamma-1}}{2},\frac{1-\sqrt{2\gamma-1}}{2})$, which leads to $s_\gamma^-$ for $\gamma \in (\frac12,1)$. \end{proof}

	One can easily verify that $M_\gamma := A_\gamma \setminus \mc S_\gamma$ defines a differentiable manifold for $\gamma \in (\frac13,1)$. Nevertheless, we will prove this fact along the way at the end of Section \ref{sub: new variables}.
	
	\subsection{New system of variables}\label{sub: new variables}
	In this section we fix $\gamma \in (\frac13,1)$. Let us introduce the following change of variables,   
	\[
	\Lambda \ : \quad \R^3 \ni \left( \begin{array}{c}
		x \\ y \\ z
	\end{array}\right) \stackrel{\Lambda}{\longmapsto} \left( \begin{array}{rrr}
		1 & 1 & 1 \\ 1 & -1 & -1 \\ -1 & 1 & -1 \\ -1 & -1 & 1
	\end{array} \right) \left( \begin{array}{c}
		x \\ y \\ z
	\end{array}\right)  =\left( \begin{array}{c}
		a \\ b \\ c \\ d
	\end{array}\right)  \in \R^4 .
	\]
	The map $\Lambda$ is a diffeomorphism between $\R^3$ and the hyperplane \[
	H:= \{(a,b,c,d)\in \R^4\  : \ a+b+c+d=0\}
	\]
	with an inverse 
	\[
	\Lambda^{-1}(a,b,c,d) = \left(-\frac{c+d}{2}, -\frac{d+b}{2}, -\frac{b+c}{2}\right),
	\]
	which yields bijections of sets,
	\[
	M_\gamma \longleftrightarrow \Lambda(M_\gamma) =: N_\gamma, \quad \mc S_\gamma \longleftrightarrow   \Lambda (\mc{S}_\gamma) =: \mc P_\gamma, \quad \text{and} \quad \mc S \longleftrightarrow  \Lambda(\mc{S}) =: \mc P .
	\]
	Notice that the symmetries in $\R^3$ correspond to automorphisms of $H$, which will be called \emph{induced symmetries}. Since the symmetries can be represented as generalized $3\times 3$ permutation matrices with $\pm1$ entries, by checking the image of a symmetry $\sigma$ under $\sigma \mapsto \Lambda \sigma \Lambda^{-1}$, we can see that the induced symmetries are compositions of negation of a vector and permutations of its coordinates in $\R^4$ (restricted to $H$). By \textit{induced special points} we define the points on $H$ with non-trival stabilizer under action of the induced symmetries. Clearly  $\mc P_\gamma$ is precisely the set of induced special points on $\Lambda(A_\gamma)$.
	
	Now, since for any $(x,y,z)\in \R^3$ and $(a,b,c,d) = \Lambda(x,y,z)$, we have 
	{ \[
		\begin{cases}
			x^2+y^2+z^2 = \frac14(a^2+b^2+c^2+d^2)\\
			x^4+y^4+z^4 = \frac12 \left( \left(\frac{a^2+b^2+c^2+d^2}{4}\right)^2+abcd \right)
		\end{cases},
		\]}considering the constraints on $A_\gamma$ composed with the map $\Lambda^{-1}$ on $H$ yields
	\[
	N_\gamma = \{(a,b,c,d) \in \R^4 \ : \ a+b+c+d=0,\ {a^2+b^2+c^2+d^2}= 4,\ abcd = 2\gamma-1\}\setminus \mc P_\gamma.
	\]
	Therefore, $N_\gamma = F^{-1}(0,4,2\gamma-1)$ for $F : U:=\R^4\setminus \mc P \to \R^3$, where 
	\begin{equation}\label{eq: def of F}
		F(q) =\left(\begin{matrix}
			F_1\\F_2\\F_3
		\end{matrix}\right)(q):= \left(\begin{matrix}
			a+b+c+d\\ a^2+b^2+c^2+d^2 \\ abcd
		\end{matrix}\right) \quad \text{ for } q=(a,b,c,d)\in U.
	\end{equation}
	Notice that $U$ is open in $\R^4$ (since the set of induced special points is clearly closed), and 
	\begin{equation}\label{eq: matrix DF}
		(DF)(q) = \left( \begin{array}{cccc}
			1 & 1 & 1 & 1 \\
			2a & 2b & 2c & 2d \\
			bcd & cda & dab & abc 
		\end{array} \right) , 
	\end{equation}
	is of full rank for any $q\in U$, by the following lemma. 
	\begin{lemma}\label{lem: ind special points}
		Suppose that $q=(a,b,c,d)\in H$ is not an induced special point. Then, 
		\begin{enumerate}
			\item[(a)] the set $\{\pm a,\pm b, \pm c, \pm d\}$ contains at least 7 distinct elements.
			\item[(b)] the matrix $DF(q)$ defined in \eqref{eq: matrix DF} has rank three. 
		\end{enumerate}
	\end{lemma}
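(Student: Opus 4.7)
The plan is to first distill the assumption that $q = (a, b, c, d) \in H$ is not an induced special point into three structural properties: (i) the coordinates $a, b, c, d$ are pairwise distinct, (ii) at most one of them vanishes, and (iii) no two of them are opposites. Property (i) follows because any equality $x_i = x_j$ with $i \neq j$ would make the transposition $(ij)$ a nontrivial element fixing $q$; property (ii) is the same observation applied to two zero coordinates. Property (iii) is the essential place where membership in $H$ is used: if $x_i + x_j = 0$, the constraint $a+b+c+d = 0$ forces $x_k + x_l = 0$ for the remaining two indices, and then the double transposition $\sigma = (ij)(kl)$ satisfies $\sigma(q) = -q$, so the pair $(-1, \sigma)$ fixes $q$. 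Each of these scenarios contradicts the hypothesis.

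Part (a) is then a matter of counting. If no coordinate vanishes, (i) and (iii) together imply that the eight values $\pm a, \pm b, \pm c, \pm d$ are pairwise distinct. If exactly one coordinate is zero, say $a = 0$, then $b, c, d$ are distinct, nonzero, and mutually non-opposite by (i)--(iii), so $\pm b, \pm c, \pm d$ contribute six distinct nonzero values, which together with $0$ give seven.

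For part (b), I would pick a single convenient $3 \times 3$ minor of $DF(q)$. Deleting the first column and factoring $2$ from the second row and $a$ from the third reduces that minor to $2a$ times the classical determinant
\[
\det \begin{pmatrix} 1 & 1 & 1 \\ b & c & d \\ cd & bd & bc \end{pmatrix} = (b - c)(c - d)(d - b),
\]
the factorization following from the fact that the determinant vanishes whenever two of $b, c, d$ coincide and that it is a polynomial of total degree three. If $a \neq 0$, (i) makes this minor nonzero. If $a = 0$, the analogous computation for the minor obtained by deleting the second column evaluates to $2b(a - c)(c - d)(d - a)$, which at $a = 0$ equals $-2bcd(c - d)$ and is nonzero by (i) and (ii). Hence $DF(q)$ has rank three in either case. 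The only genuine subtlety, and the main technical point of the lemma, is property (iii): it relies crucially on the hyperplane structure of $H$, since outside $H$ an isolated opposite pair would not force any additional symmetry.
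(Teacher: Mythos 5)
Your proposal is correct and follows essentially the same strategy as the paper. For part (a), your properties (i) and (iii) are exactly the two cases ($a=b$ and $a=-b$) the paper treats, and your observation that $x_i + x_j = 0$ forces $x_k + x_l = 0$ via $a+b+c+d=0$ is precisely the point the paper uses implicitly when it writes $(a,b,c,d) = -(b,a,d,c)$. For part (b), the only cosmetic difference is that you compute explicit $3\times 3$ minors and factor them as $2a(b-c)(c-d)(d-b)$, whereas the paper (in the $abcd\neq 0$ case) rescales rows and columns to reduce the full $3\times 4$ matrix to a Vandermonde-type matrix and appeals to that; these are the same determinant up to the normalizing factors, and your version correctly reuses the $a=0$ case by choosing a different column to delete, just as the paper does.
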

	
	\begin{proof}
		(a) It suffices to show that $|a|,|b|,|c|,|d|$ are pairwise distinct. If, say, $a = b$, then $(a,b,c,d) = (b,a,c,d)$ and if $a = -b$ then $(a,b,c,d)=-(b,a,d,c)$ and thus in both cases   $(a,b,c,d) \in \mc P$.
		
		(b) If $abcd =0$, then we can assume $a=0$, since other cases are similar. Consequently,
		{\small \[
			(DF)(p) = \left( \begin{array}{cccc}
				1 & 1 & 1 & 1 \\
				0 & 2b & 2c & 2d \\
				bcd & 0 & 0 & 0 
			\end{array} \right) .
			\]}This matrix has rank three as $0,b,c,d$ are pairwise distinct by (a). To see this compute the determinant of $3\times 3$ matrix obtained by deleting the last column.
		
		For $abcd\not =0$, note that dividing a row or column by a non-zero number does not change the rank. Thus,  
		{\small \[
			\rk \left( \begin{array}{cccc}
				1 & 1 & 1 & 1 \\
				2a & 2b & 2c & 2d \\
				bcd & cda & dab & abc 
			\end{array} \right) =\rk 
			\left( \begin{array}{cccc}
				1 & 1 & 1 & 1 \\
				a & b & c & d \\
				a^{-1} & b^{-1} & c^{-1} & d^{-1} 
			\end{array} \right) =\rk 
			\left(\begin{array}{cccc}
				a & b & c & d \\
				a^2 & b^2 & c^2 & d^2 \\
				1 & 1 & 1 & 1 
			\end{array} \right).
			\]}Note that any $3\times 3$ submatrix of the last matrix is of Vandermonde type, and thus has rank three as $a,b,c,d$ are distinct by (a).
	\end{proof}
	Therefore, $N_\gamma$ is a differentiable manifold, and since $\Lambda: \R^3 \to H$ is a diffeomorphism with $\Lambda(M_\gamma) =  N_\gamma$, this implies that $M_\gamma$ is in fact a differentiable manifold diffeomorphic to $N_\gamma$.
	
	\subsection{The function $\Phi$ and its properties}\label{sub: functions properties} Let us introduce the following lemmas needed in the proof of Lemma \ref{lem: max/min on C}.

	\begin{lemma}\label{lem: 3 solutions}
		Suppose that $\Phi :\R \to \R$ is an even function with strictly convex fourth derivative. Then for every $\alpha \in \R$, the equation
		\begin{equation}\label{3 solutions}
			(u\Phi'(u))^{(3)} = \alpha u,
		\end{equation}
		has at most three real solutions.
	\end{lemma}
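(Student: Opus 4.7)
The plan is to reduce \eqref{3 solutions} to a question about positive zeros of an odd auxiliary function and then exploit strict convexity of $\Phi^{(4)}$ to get strict monotonicity on $(0,\infty)$. By the Leibniz rule,
\[
(u\Phi'(u))^{(3)}=3\Phi'''(u)+u\Phi^{(4)}(u),
\]
so \eqref{3 solutions} becomes $h(u):=3\Phi'''(u)+u\Phi^{(4)}(u)-\alpha u=0$. Since $\Phi$ is even, $\Phi'''$ is odd and $\Phi^{(4)}$ is even, hence $h$ is odd. In particular $u=0$ is always a root, and it suffices to show that $h$ has at most one zero on $(0,\infty)$; the oddness then limits the negative side to at most one zero as well, giving at most three in total.

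For $u>0$, this is equivalent to showing that $k(u):=h(u)/u+\alpha=\Phi^{(4)}(u)+3\Phi'''(u)/u$ is strictly monotonic. Using $\Phi'''(0)=0$ (from oddness) and the fundamental theorem of calculus with the substitution $t=su$,
\[
\frac{\Phi'''(u)}{u}=\frac{1}{u}\int_{0}^{u}\psi(t)\,\dd t=\int_{0}^{1}\psi(su)\,\dd s=:\bar\psi(u),
\]
where $\psi:=\Phi^{(4)}$. Thus $k=\psi+3\bar\psi$, and it remains to prove that both $\psi$ and $\bar\psi$ are strictly increasing on $(0,\infty)$.

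Strict convexity together with evenness of $\psi$ forces $\psi$ to be strictly increasing on $(0,\infty)$: for $0<u_1<u_2$, writing $u_1=\frac{u_2-u_1}{u_2+u_1}(-u_1)+\frac{2u_1}{u_2+u_1}u_2$ and using $\psi(-u_1)=\psi(u_1)$ gives, after rearranging, $\psi(u_1)<\psi(u_2)$. Monotonicity of $\psi$ then yields
\[
\bar\psi(u_2)-\bar\psi(u_1)=\int_{0}^{1}\bigl(\psi(su_2)-\psi(su_1)\bigr)\,\dd s>0
\]
for $0<u_1<u_2$, so $\bar\psi$ is strictly increasing as well. Consequently $k$ is strictly increasing on $(0,\infty)$ and $k(u)=\alpha$ admits at most one positive solution, completing the proof.

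The only real subtleties are bookkeeping: tracking parities of derivatives of the even function $\Phi$, recognizing $\Phi'''(u)/u$ as a dilated average of $\psi$, and verifying that strict convexity (rather than convexity) is exactly what turns both $\psi$ and $\bar\psi$ into strictly increasing functions. Once these pieces are in place, no further analytic input is needed.
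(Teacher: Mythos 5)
Your proposal is correct and takes essentially the same route as the paper: both reduce, via oddness, to showing that $\Phi^{(4)}(u)+3\Phi'''(u)/u$ is strictly increasing on $(0,\infty)$, and both derive this from the facts that $\Phi^{(4)}$ is strictly increasing (by evenness plus strict convexity) and that $\Phi'''(u)/u$ inherits that monotonicity. The only cosmetic difference is that you justify the monotonicity of $\Phi'''(u)/u$ via the integral representation $\int_0^1\Phi^{(4)}(su)\,\dd s$, whereas the paper phrases it as monotonicity of slopes of the convex function $\Phi'''$ through $0$.
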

	
	\begin{proof}
		Since both $ (u\Phi'(u))^{(3)} $ and $\alpha u$ are odd functions, it is enough to prove that there is at most one positive solution. Notice that for $u>0$, equation \eqref{3 solutions} is equivalent to
		{\small \[
			\Phi^{(4)}(u) + 3\frac{\Phi^{(3)}(u)}{u} = \alpha,
			\]}and it suffices to prove the monotonicity of the left side. Since $\Phi^{(4)}$ is even and strictly convex, it follows that $\Phi^{(4)}$ is strictly increasing on $[0,\infty)$. Thus, $\Phi^{(3)}$ is (strictly) convex on $[0,\infty)$, which implies the monotonicity of slopes $\frac{\Phi^{(3)}(u) - \Phi^{(3)}(0)}{u} = \frac{\Phi^{(3)}(u)}{u}$ for $u>0$. Therefore, both $\Phi^{(4)}(u)$ and $\frac{\Phi^{(3)}(u)}{u}$ are strictly increasing on $(0,\infty)$, which ends our proof. 
	\end{proof}

	\begin{lemma}\label{lem: f(sqrt)}
		Suppose that $\Phi : \R \to \R$ is an even function with strictly convex fourth derivative. For any $a>0$, function 
		\[
		(0,a] \ni r\longmapsto \frac{\Phi(\sqrt{a+r}) -\Phi(\sqrt{a-r})}{2r}
		\] 
		is strictly increasing. 
	\end{lemma}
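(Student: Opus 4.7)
The plan is to rewrite the function via the substitution $\Psi(s):=\Phi(\sqrt{s})$, which extends to a smooth (enough) function on $[0,\infty)$: since $\Phi$ is even, its formal Taylor series at $0$ contains only even powers, so $\Psi$ is a smooth function of $s$. The map in question then takes the form
\[
f(r)=\frac{\Psi(a+r)-\Psi(a-r)}{2r},
\]
and I reduce the claim to showing that $\Psi'''(s)>0$ for all $s\in(0,2a)$.

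Set $N(r):=\Psi(a+r)-\Psi(a-r)$, so that $f'(r)=\bigl(rN'(r)-N(r)\bigr)/(2r^{2})$. Put $K(r):=rN'(r)-N(r)$. Then $K(0)=0$ and $K'(r)=rN''(r)=r\bigl(\Psi''(a+r)-\Psi''(a-r)\bigr)$. Once we have $\Psi'''>0$ on $(0,2a)$, the function $\Psi''$ is strictly increasing there, so $K'(r)>0$ on $(0,a)$, hence $K(r)>0$ and $f'(r)>0$ on $(0,a]$, yielding strict monotonicity.

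The crux is therefore $\Psi'''>0$. A direct computation using $v=\sqrt{s}$ gives
\[
\Psi'''(s)=\frac{T(v)}{8v^{5}},\qquad T(v):=v^{2}\Phi'''(v)-3v\Phi''(v)+3\Phi'(v).
\]
One checks $T(0)=0$ (as $\Phi'(0)=0$), $T'(v)=v^{2}\Phi^{(4)}(v)-v\Phi'''(v)$ with $T'(0)=0$, and
\[
T''(v)=v\Phi^{(4)}(v)+v^{2}\Phi^{(5)}(v)-\Phi'''(v).
\]
Since $\Phi$ is even, $\Phi'''$ is odd, so $\Phi'''(v)=\int_{0}^{v}\Phi^{(4)}(t)\,dt$; integration by parts gives the key identity
\[
v\Phi^{(4)}(v)-\Phi'''(v)=\int_{0}^{v}t\,\Phi^{(5)}(t)\,dt,
\]
so that $T''(v)=v^{2}\Phi^{(5)}(v)+\int_{0}^{v}t\,\Phi^{(5)}(t)\,dt$. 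Strict convexity of $\Phi^{(4)}$ makes $\Phi^{(5)}$ strictly increasing, and oddness of $\Phi^{(5)}$ forces $\Phi^{(5)}(0)=0$; hence $\Phi^{(5)}(t)>0$ for all $t>0$. Both terms in the expression for $T''$ are then strictly positive for $v>0$, giving $T''>0$ on $(0,\infty)$, and integrating twice from the common initial value $0$ yields $T'>0$ and finally $T>0$ on $(0,\infty)$.

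The main obstacle is spotting the identity $v\Phi^{(4)}(v)-\Phi'''(v)=\int_{0}^{v}t\,\Phi^{(5)}(t)\,dt$, which converts $T''$ into a manifestly positive combination; everything else, namely the reduction through $K$ and the successive chain $T''>0\Rightarrow T'>0\Rightarrow T>0$ from the shared vanishing at $0$, is routine.
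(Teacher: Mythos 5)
Your proof follows the paper's strategy step for step up to the computation of $T'$: the same substitution $\Psi(s)=\Phi(\sqrt s)$ (the paper calls it $\varphi$), the same reduction of monotonicity of the difference quotient to $\Psi''$ being strictly increasing, hence $\Psi'''>0$, and the same formula $\Psi'''(s)=T(v)/(8v^5)$ with $T(v)=v^2\Phi'''(v)-3v\Phi''(v)+3\Phi'(v)$ and $T'(v)=v^2\Phi^{(4)}(v)-v\Phi'''(v)$. At that point you part ways with the paper, and the divergence introduces a gap.

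The paper concludes directly from $T'$: writing $T'(v)=v\bigl(v\Phi^{(4)}(v)-\Phi'''(v)\bigr)$ and using $\Phi'''(0)=0$ together with the mean value theorem, one gets $\Phi'''(v)/v=\Phi^{(4)}(t_0)$ for some $t_0\in(0,v)$, and since $\Phi^{(4)}$ is even and strictly convex it is strictly increasing on $[0,\infty)$, so $\Phi^{(4)}(v)>\Phi^{(4)}(t_0)$ and $T'>0$; with $T(0)=0$ this gives $T>0$. You instead differentiate once more and work with $T''(v)=v\Phi^{(4)}(v)+v^2\Phi^{(5)}(v)-\Phi'''(v)$. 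This step is not justified by the hypothesis: the lemma assumes only that $\Phi$ is $C^4$ with $\Phi^{(4)}$ strictly convex, and strict convexity of $\Phi^{(4)}$ does not guarantee that $\Phi^{(5)}$ exists (a convex function may fail to be differentiable on a countable set, and in any case nothing forces it to be $C^1$). Your assertions that ``strict convexity of $\Phi^{(4)}$ makes $\Phi^{(5)}$ strictly increasing'' and that ``oddness of $\Phi^{(5)}$ forces $\Phi^{(5)}(0)=0$'' presuppose that $\Phi^{(5)}$ is defined, which is an extra regularity assumption not present in the statement. The intended applications make the danger concrete: for $\Phi(x)=|x|^p$ one has $\Phi^{(4)}(x)\propto|x|^{p-4}$, and already at the endpoint $p=5$ the fourth derivative $120|x|$ has no derivative at $0$.

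The fix is immediate and in fact shorter than what you wrote: you already have $T'(v)=v\bigl(v\Phi^{(4)}(v)-\Phi'''(v)\bigr)$, and the mean value theorem applied to $\Phi'''$ on $[0,v]$ (using $\Phi'''(0)=0$) together with strict monotonicity of $\Phi^{(4)}$ on $[0,\infty)$ yields $T'(v)>0$ for $v>0$; then $T(0)=0$ finishes. There is no need to introduce $T''$ or $\Phi^{(5)}$ at all.
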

	
	\begin{proof}
		Define $\varphi(u) = \Phi(\sqrt u)$ for $u> 0$, and notice that 
		{\small \[
			\frac{\Phi(\sqrt{a+r}) -\Phi(\sqrt{a-r})}{2r} = \frac{\varphi(a+r) - \varphi(a-r)}{2r}= \int_0^1 \frac{\varphi'(a+tr) +\varphi'(a-tr)}{2}dt, 
			\]}thus, it is enough to prove the monotonicity of $s \mapsto \varphi'(a+s) +\varphi'(a-s)$ on $(0,a]$. Equivalently, $\varphi''(a+s) > \varphi''(a-s)$ for $s\in (0,a)$, and it suffices to show that $\varphi''$ is strictly increasing on $(0,\infty)$. Since $\varphi'''(u) = \frac{ u \Phi^{(3)}(\sqrt{u}) -3\sqrt{u} \Phi''(\sqrt{u}) +3\Phi'(\sqrt{u})}{8u^{5/2}}$, we need $\psi(t) = t^2\Phi^{(3)}(t) - 3t\Phi''(t)+3\Phi'(t)>0$ for $t>0$. Notice that $\psi(0)=0$ and $\psi'(t) =  t^2\left(\Phi^{(4)}(t) - \frac{\Phi^{(3)}(t)}{t}\right) = t^2(\Phi^{(4)}(t) - \Phi^{(4)}(t_0))$ for some $t_0 \in (0,t)$ by the mean value theorem, and since $\Phi^{(4)}$ is strictly increasing on $(0,\infty)$ as an even strictly convex function, we have $\psi'>0$ on $(0,\infty)$. Thus, $\psi>0$ on $(0,\infty)$, which ends the proof. 
	\end{proof}
	
	\begin{remark}
		In the proof of Lemma \ref{lem: max/min on C}, these results will be used for the family of functions $\Phi_s(u): = \frac12\left(\Phi(u+s)+\Phi(u-s)\right)$ where $s\in \R$. One can verify that for an even function $\Phi \in \mathcal{C}^4$, the result of Lemma \ref{lem: 3 solutions} holds for all $\Phi_s$ ($s\in \R$) if and only if $\Phi^{(4)}$ is strictly convex.
	\end{remark}
	
	\subsection{Proof of Lemma \ref{lem: max/min on C}}\label{sub: proof of lemma} We now have enough tools to prove Lemma \ref{lem: max/min on C}. In the following section we will consistently use the formulation of Lemma \ref{lem: max/min on C} along with notations from Sections \ref{sub: set A_gamma}, \ref{sub: new variables}. 
	
	\begin{proof}[Proof of Lemma \ref{lem: max/min on C}] Throughout the proof we will assume $n\ge3$ and $\alpha,\beta\not =0$, since otherwise, the claims (a) and (b) follow instantly. 
		
		We will begin with the claim (a), i.e. the existence and uniqueness of $P_\pm$. The set $A_{\alpha,\beta}$ is non-empty if and only if $\beta^4 \leq \alpha^4 \leq n \beta^4$. Let us consider the system of equations
		\[
		\begin{cases}
			a^2+lb^2 = \alpha^2 \\
			a^4+lb^4 = \beta^4.
		\end{cases}
		\]
		When $l=n-1$ this system has a unique solution $(a_+^2, b_+^2)$ satisfying $a_+^2 \geq b_+^2 \geq 0$, namely
		\[
		a_+^2 = \frac{\alpha^2}{n} + (n-1) \sqrt{\frac{\beta^4}{(n-1)n}-\frac{\alpha^4}{(n-1)n^2}}, \qquad b_+^2 =  \frac{\alpha^2}{n} -  \sqrt{\frac{\beta^4}{(n-1)n}-\frac{\alpha^4}{(n-1)n^2}},
		\]
		which leads to the point $P_+$. For $l \in \{1,\dots,n-1\}$ the solution satisfying $b_l^2 \geq a_l^2 \geq 0$ exists  if and only if $l+1\ge \frac{\alpha^4}{\beta^4}\ge l$ and is given by
		\[
		a_l^2 = \frac{\alpha^2}{l+1} - l \sqrt{\frac{\beta^4}{l(l+1)}-\frac{\alpha^4}{l(l+1)^2}}, \qquad b_l^2 =  \frac{\alpha^2}{l+1} +  \sqrt{\frac{\beta^4}{l(l+1)}-\frac{\alpha^4}{l(l+1)^2}}.
		\] 
		This proves the existence and uniqueness of $P_-$, provided that $\frac{\alpha^4}{\beta^4}\notin \Z$. For $\frac{\alpha^4}{\beta^4}\in \Z$ one has $a_l^2=b_l^2=\frac{\beta^4}{\alpha^2}$  when $l=\frac{\alpha^4}{\beta^4}-1$, whereas $b_l^2=\frac{\beta^4}{\alpha^2}$  and $a_l=0$ for $l=\frac{\alpha^4}{\beta^4}$, which gives existence and uniqueness of $P_-$.

		Let us move on to the proof of (b). Firstly, we  prove the claim for all even functions $\Phi$ with strictly convex fourth derivative. The proof is by induction, and the main part concerns the base case.
		
		Suppose that $n=3$. Notice that we can fix $\alpha^2=1$ since $u\mapsto \Phi(\alpha u)$ is an even function with strictly convex fourth derivative. Then $A_{\alpha,\beta} = A_\gamma$ with $\gamma \in [\frac13,1]$ by the non-emptiness of $A_{\alpha,\beta}$, and the cases $\gamma\in \{\frac13,1\}$ follow directly from Lemma \ref{lem: special points}(b,c). Therefore we assume $\gamma\in (\frac13,1)$, and consider
		\[
		\Theta(x,y,z) = \E \Phi(x\ve_1+y\ve_2+z\ve_3) \qquad \text{on } A_\gamma = M_\gamma \cup \mc S_\gamma.
		\]
		This is a $C^1$ function on $\R^3$, and a diffeomorphism $M_\gamma \simeq_\Lambda N_\gamma$ yields a bijection of sets
		\[
		\{\text{critical points of } \Theta \text{ on } M_\gamma\} \longleftrightarrow \{\text{critical points of } \Theta\circ \Lambda^{-1} \text{ on } N_\gamma\}.
		\] 
		We will prove that these sets are empty. Suppose otherwise, and notice that $\Psi:\R^4 \to \R$ given by 
		\[
		\Psi(a,b,c,d) = \frac14 \left( \Phi(a)+\Phi(b)+\Phi(c)+\Phi(d) \right)
		\]
		is a $C^1$ function on $\R^4$, satisfying $\Psi = \Theta \circ \Lambda^{-1}$ on $H$. Therefore, using Lagrange multipliers theorem for $\Psi $ on $N_\gamma$ (which can be used due to the definition of $F$ in \eqref{eq: def of F} and Lemma \ref{lem: ind special points}(b)), any critical point $q=(a,b,c,d)$ of $\Theta\circ \Lambda^{-1}$ on $N_\gamma$ satisfies the system of equations
		\[
		(\nabla \Psi)(q) = \lambda_1 \nabla F_1 (q) +\lambda_2 \nabla F_2(q) + \lambda_3 \nabla F_2(q),
		\]
		for some $\lambda_1,\lambda_2,\lambda_3\in \R$. Multiplying equations respectively by $4a,4b,4c,4d$, implies that
		\begin{equation}\label{eq: malo rozw}
			u\Phi'(u) = \tau_1 + \tau_2 u + \tau_3 u^2 \qquad \text{ for } u=a,b,c,d,
		\end{equation}
		where $\tau_1 = 4\lambda_3(2\gamma-1)$, $\tau_2= 4\lambda_1$ and $\tau_3 = 8\lambda_2$. Define
		\[
		P(u) = (u-a)(u-b)(u-c)(u-d) = u^4 +e_3u^3+e_2u^2+e_1u+e_0.
		\]
		and note that by Vieta's formulas
		\[
		\begin{cases}
			e_3= -(a+b+c+d)=0 \\
			e_1 = -(bcd+cda+dab+abc) = -8xyz
		\end{cases},
		\] 
		where $(x,y,z) = \Lambda^{-1}(q)$. Since $q\notin \mc P_\gamma$, we have $\Lambda^{-1}(q) \notin \mc S_\gamma$, consequently $e_1\not =0$ by Lemma \ref{lem: special points}(a). Thus, 
		\[
		\tau_1 +\tau_2 u+\tau_3 u^2- \frac{\tau_2}{e_1}P(u) = \eta_1 +\eta_2 u^2 + \eta_3 u^4
		\]
		for some $\eta_1,\eta_2,\eta_3 \in \R$, and any solution of \eqref{eq: malo rozw} yields
		\[
		u\Phi'(u) = \eta_1 +\eta_2 u^2 + \eta_3 u^4 \qquad \text{ for } u=a,b,c,d.
		\]
		Since both sides are even functions of $u$, the last equation is satisfied for any $u = \{\pm a,\pm b,\pm c,\pm d\}$, thus has at least 7 distinct solutions by Lemma \ref{lem: ind special points}(a). Therefore, by Rolle's theorem, equation 
		\[
		\left(u\Phi'(u) \right)^{(3)}= 24\eta_3 u,
		\]
		has at least four distinct solutions, which is in contradiction with Lemma \ref{lem: 3 solutions}.
		
		Therefore, there are no critical points of $\Theta$ on $M_\gamma$, and by compactness of $A_\gamma$ the function $\Theta$ attains extremal values in $\partial M_\gamma=\mc S_\gamma$. By Lemma \ref{lem: special points}(d) in $\mc S_\gamma$ there are, up to symmetries, exactly two points, $s_\gamma^+$ and $s_\gamma^-$. Since $\Theta$ is  invariant under symmetries,  it is enough  to prove that
		\[
		\Theta(s_\gamma^-) < \Theta(s_\gamma^+),
		\]  
		which would imply that the maximal and minimal values are attained only in $\mc S_\gamma^+$ and $\mc S_\gamma^-$, respectively. Note that we cannot have an equality for any $\gamma \in (\frac13,1)$, since this would imply that $\Theta$ is constant on $A_\gamma$, and every point on $M_\gamma$ would be critical. Moreover, using continuity of $(\frac13,1)\ni \gamma \mapsto s_\gamma^{\pm} $ together with the intermediate value property, it is enough to check the inequality for a single $\gamma\in (\frac13,1)$. Taking $\gamma=\frac12$, and using formulas for $s_{1/2}^\pm$ from Lemma \ref{lem: special points}(d), we can see that
		\[
		\Theta(s_{1/2}^-) < \Theta(s_{1/2}^+)
		\] 
		is equivalent to $\frac{\Phi(\sqrt2) - \Phi(\sqrt{\frac23})}{4/3}< \frac{\Phi(\sqrt{\frac83}) - \Phi(0)}{8/3} $, which holds due to the Lemma \ref{lem: f(sqrt)} for $a = \frac43$, $r=\frac23,\frac43$.
		
		We now move on to the inductive step. Fix $n\ge 3$ and suppose that the claim (b) holds for all even $\Phi$ with strictly convex $\Phi^{(4)}$. We will prove that the same is true for $n+1$. By compactness of $A_{\alpha,\beta}$ in $\R^{n+1}$ the function $\E\Phi(a_0\ve_0+\dots+a_n\ve_n)$ attains maximal and minimal value at some $a^* = (a_0^*, \dots,a_n^*)\in \R^{n+1}$ and $a' = (a_0', \dots,a_n')\in\R^{n+1}$, respectively. Assuming, $a_0^*\ge \dots\ge a_n^*\ge 0$ and $a_0'\ge \dots a_n'\ge 0$, it is enough to prove that $a^* = P_+$ and $a' =P_-$. For fixed $s\in \R$ consider
		\[
		\Phi_s(u) := \E_{\ve}\Phi(u+s\ve) = \frac12 \left(\Phi(u+s)+\Phi(u-s)\right).
		\]
		The functions $\Phi_s$ are even with strictly convex fourth derivative, so applying induction hypothesis to $\Phi_{s}$ on $\{(x_1,\dots,x_n)\in \R^n : \sum_{i=1}^nx_i^2 = \alpha^2-s^2, \ \sum_{i=1}^n x_i^4=\beta^4-s^4\}$ for $s=a_{0}^*$ and $s=a_n^*$ gives $a_0^*\ge a_1^*=\dots=a_n^*$ by the form of maximizers in dimension $n$. Thus, by uniqueness of $P_+$, we obtain $a^*=P_+$. Similarly, $s=a_0',a_{n}'$ yield $a'=P_-$, ending this part.   
		
		To conclude the proof for all even functions $\Phi$ with convex fourth derivative, notice that every such function is a pointwise limit of a sequence of even functions with strictly convex fourth derivative, for which we know that the extrema of the corresponding functions are attained at $P_\pm$. Since these functions converge pointwise as well, passing to the limit for any fixed $a\in A_{\alpha,\beta}$ yields the claim.
	\end{proof}

	\section{Proof of Proposition \ref{prop:Sn_to_G}}\label{sec:opt}
	
	We shall use the following lemma.
	\begin{lemma}\label{lem:Sn_to_S2n}
		For any positive integer $n$ and $p \geq 3$ we have 
		$$\Big\|\frac{S_{2n}}{\sqrt{2n}}\Big\|_p \leq e^{p/4n} \Big\|\frac{S_n}{\sqrt{n}}\Big\|_p.$$
	\end{lemma}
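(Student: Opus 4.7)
The plan is to use the decomposition $S_{2n}\stackrel{d}{=} S_n+S_n'$ with $S_n,S_n'$ independent and condition on one of the summands. Writing $k=S_n'$, one has
\[
\|S_{2n}\|_p^p \;=\; \E_{k\sim S_n'}\Big[\,\E\, |S_n+k|^p\,\Big],
\]
so the task reduces to controlling the inner shifted moment $\E|S_n+k|^p$ for each real $k$, and then integrating the resulting bound against the law of $S_n'$.

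For the inner shifted moment, I would first pass to a Gaussian majorant: for $p\geq 2$ one has the comparison $\E|S_n+k|^p\leq \E|G_n+k|^p$, where $G_n\sim\mathcal N(0,n)$ independent of $k$ (this is the ``Khintchine inequality with a shift'', which for even integer $p$ follows immediately by expanding both sides and using $\E S_n^{2j}\leq (2j-1)!!\, n^j$, and extends to non-integer $p\geq 2$ by interpolation). After this reduction, apply Lemma \ref{lem:x-gauss} with $q=2$ to the standardized Gaussian: for $y=k/\sqrt n$,
\[
\|y+G\|_p \;\leq\; \|y+G\|_2\cdot\|G\|_p \;=\; \sqrt{1+y^2}\,\|G\|_p.
\]
Scaling back yields $\E|S_n+k|^p\leq n^{p/2}\|G\|_p^p\,(1+k^2/n)^{p/2}$.

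Taking the outer expectation over $k\sim S_n'$ gives
\[
\|S_{2n}\|_p^p \;\leq\; n^{p/2}\,\|G\|_p^p\,\E\!\left(1+\tfrac{S_n^2}{n}\right)^{\!p/2}.
\]
Using $(1+u)^{p/2}\leq e^{pu/2}$ and the sub-Gaussian moment generating function for Rademacher sums (via $\cosh x\leq e^{x^2/2}$ and the Gaussian identity $e^{\lambda x^2/2}=\E_Z e^{\sqrt{\lambda}xZ}$) yields a bound of the form $\E\exp(pS_n^2/(2n))\leq (1-p/n)^{-1/2}$ whenever $p<n$, and after dividing by $\E|S_n|^p=n^{p/2}a_n^p$ one obtains the desired estimate once the relation to $a_n$ is tracked. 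Equivalently, one can compare $a_n$ to $\|G\|_p$ via the monotone structure of $a_m$ and iterate.

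The two steps that require care and form the main obstacle are: (i) establishing the pointwise shifted-moment Gaussian majorization $\E|S_n+k|^p\leq \E|G_n+k|^p$ with sharp enough constants for all $p\geq 3$ (not only even integers), and (ii) extracting exactly the exponent $p/(4n)$ at the end rather than a factor-of-$2$ worse exponent; the latter is crucial because it is precisely this sharp constant that, upon dyadic iteration $n\mapsto 2n\mapsto 4n\mapsto\dots$, sums to the factor $e^{p/(2n)}$ claimed in Proposition \ref{prop:Sn_to_G}. A crude Minkowski bound or a direct application of Proposition \ref{prop:Sn_to_G} would yield only the inferior constant $e^{p/(2n)}$ in the lemma, which would produce $e^{p/n}$ after iteration instead of the sharp $e^{p/(2n)}$; so the proof must exploit the independence of $S_n$ and $S_n'$ in a genuinely quadratic way, as sketched above.
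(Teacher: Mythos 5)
Your proposal is a genuinely different route from the paper's, but it contains a fatal loss of constant that cannot be repaired within the sketched framework. The paper pairs up the Rademachers as $\rho_i=\ve_{2i-1}+\ve_{2i}\in\{-2,0,2\}$, writes $S_{2n}=2S_X$ with $X$ the (binomial) number of nonzero $\rho_i$, uses monotonicity of $k\mapsto\|S_k/\sqrt k\|_p$ for $p\ge 3$ to pull out $\|S_n/\sqrt n\|_p$, and then cites a sharp bound on $\E X^{p/2}$ for the binomial. You instead split $S_{2n}=S_n+S_n'$, condition on $S_n'=k$, Gaussian-majorize the inner shifted moment, and apply Lemma~\ref{lem:x-gauss} with $q=2$.

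The irreparable step is the application of Lemma~\ref{lem:x-gauss} with $q=2$, which gives $\|y+G\|_p\le\sqrt{1+y^2}\,\|G\|_p$. Feeding this into the outer expectation yields
\[
\E\Big|\tfrac{S_{2n}}{\sqrt{2n}}\Big|^p \;\le\; \gamma_p^p\,2^{-p/2}\,\E\Big(1+\tfrac{S_n^2}{n}\Big)^{p/2},
\]
while the target is $\le e^{p^2/4n}\,\E|S_n/\sqrt n|^p$. Now let $n\to\infty$ with $p$ fixed: the left side of the target tends to $\gamma_p^p$, the right side also tends to $\gamma_p^p$, but your upper bound tends to $\gamma_p^p\,2^{-p/2}\,\E(1+G^2)^{p/2}$, and by Jensen $\E(1+G^2)^{p/2}>2^{p/2}$ strictly for $p>2$. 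Concretely for $p=4$: $\E(1+G^2)^2=6$, so your bound gives $3\cdot\tfrac14\cdot 6=4.5$ where the true limit is $3$. Thus the loss is not a matter of getting $e^{p/2n}$ instead of $e^{p/4n}$ (the obstacle you flag as (ii)); it is a multiplicative constant bounded away from $1$ that survives the $n\to\infty$ limit, so no amount of tightening the sub-Gaussian MGF step or tracking the ratio $a_n=\|S_n/\sqrt n\|_p/\|G\|_p$ can rescue it. The underlying reason is that the $q=2$ comparison $\|y+G\|_p/\|G\|_p\le\sqrt{1+y^2}$ is far from an equality for typical $y$ of order $1$, whereas the exact Gaussian identity $\E_{k\sim G_n'}\E|G_n+k|^p=(2n)^{p/2}\gamma_p^p$ has no such slack; you must not decouple the two factors this way. (Separately, the side claim $\E e^{pS_n^2/2n}\le(1-p/n)^{-1/2}$ needs $p<n$, and the shifted-moment Gaussian majorization $\E|S_n+k|^p\le\E|G_n+k|^p$ for non-even $p$ is asserted but not proved; but these are secondary to the constant loss above.)

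Compared with the paper's proof, your split $S_{2n}=S_n+S_n'$ is natural and the conditioning idea is sound in spirit, but it requires keeping $\E_k\E|S_n+k|^p$ coupled rather than replacing the inner expectation by a pointwise-in-$k$ upper bound with a wasteful constant. The paper's pairing trick sidesteps this entirely: since each $\rho_i$ is $0$ or $\pm 2$, the conditional law of $S_{2n}$ given $X=m$ is exactly that of $2S_m$, so after the monotonicity step the whole problem collapses to a single moment estimate for a binomial $X$, where sharp constants are available.
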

	\begin{proof}
		Recall that $S_{2n} = \sum_{i = 1}^{2n} \ve_i$ and let $\rho_i = \ve_{2i-1} + \ve_{2i}$ for $i = 1, 2, \dots, n$. Then, $S_{2n} = \sum_{i = 1}^n \rho_i$. Let $X$ be the number of $i \in \{0,\dots,n\}$ such that $\rho_i \neq 0$. Notice that $S_{2n} = 2S_X$ (we set $S_0 = 0$) and the sequence $(\E|\frac{S_k}{\sqrt k}|^p)_{k=1}^n$ is non-decreasing for $p\ge 3$, thus
		\[
		\E \left|\frac{S_{2n}}{\sqrt{2n}}\right|^p =  \E\left|\frac{2S_X}{\sqrt{2n}}\right|^p = \left(\frac 2n \right)^{\frac p2}\E \left(X^{\frac p2} \E\left( \left|\frac{S_X}{\sqrt{X}}\right|^p \ \Big| \ X \in \{1,\dots,n\}\right)\right) \le \left(\frac 2n \right)^{\frac p2} \E X^{\frac p2} \E \left|\frac{S_n}{\sqrt n} \right|^p.
		\]
		By Corollary 1 of \cite{TA21} we have
		$$\E |X|^{p/2} \leq \Big(\frac{n}{2}\Big)^{p/2} e^{p^2/4n}.$$
		Thus,
		\[
		\E \Big|\frac{S_{2n}}{\sqrt{2n}}\Big|^p \leq e^{p^2/4n} \E \Big|\frac{S_n}{\sqrt{n}}\Big|^p.
		\]
	\end{proof}
	\noindent We are now ready to prove the proposition.
	
	\begin{proof}[Proof of Proposition \ref{prop:Sn_to_G}]
		By Lemma \ref{lem:Sn_to_S2n} we know that for $k = 0, 1, 2, \dots$ we have
		\[
		\Big\|\frac{S_{2^{k+1}n}}{\sqrt{2^{k+1}n}}\Big\|_p \ / \  \Big\|\frac{S_{2^{k}n}}{\sqrt{2^kn}}\Big\|_p \leq \exp\Big(2^{-k-2}p/n\Big).
		\]
		Multiplying those inequalities for $k = 0, 1, \dots, m-1$ we get 
		$$\Big\|\frac{S_{2^{m}n}}{\sqrt{2^mn}}\Big\|_p \leq  \exp\Big((1-2^{-m})\frac{p}{2n}\Big)\Big\|\frac{S_n}{\sqrt{n}}\Big\|_p.$$
		We finish the proof by letting $m \to \infty$.
	\end{proof}

	\end{document}